\documentclass[journal,10pt]{elsarticle}
\usepackage[OT1]{fontenc}  	
\usepackage[utf8]{inputenc}
\usepackage[greek,swedish,spanish,french,english,USenglish]{babel} 		
\usepackage{amssymb}  			
\usepackage{amsmath}	
\usepackage{amsthm}	
\usepackage{mathrsfs}	
\usepackage{esint}
\usepackage{units}
\usepackage{bbm}   				
\usepackage{color,graphicx} 			
\usepackage[small]{caption}
\usepackage{cases}			
\usepackage{mathpazo}  
\usepackage{bm}  
\usepackage{bigints_mod}	

\newcommand{\be}{\begin{equation}}
\newcommand{\ee}{\end{equation}}

\renewcommand{\Re}{\mathop{\rm Re}\nolimits}

\newcommand{\sh}{\mathop{\rm sh}\nolimits}
\newcommand{\ch}{\mathop{\rm ch}\nolimits}

\newcommand{\ctg}{\mathop{\rm ctg}\nolimits}

\newcommand{\pv}{\mathop{\rm p.v.}\nolimits}

\newcommand{\sgn}{\mathop{\rm sgn}\nolimits}

\newtheorem{theorem}{Theorem}
\newtheorem{lemma}{Lemma}
\newtheorem{corollary}{Corollary}

\newtheorem{remark}{Remark}

\setlength{\arraycolsep}{0pt}
\addtolength{\textwidth}{38mm} 
\addtolength{\voffset}{-26mm} 
\addtolength{\textheight}{48mm} 
\addtolength{\hoffset}{-19mm} 
\setlength{\bibsep}{3pt}

\setlength{\captionmargin}{6mm}

\makeatletter
\def\ps@pprintTitle{%
\let\@oddhead\@empty
\let\@evenhead\@empty
\let\@oddfoot\@empty
\let\@evenfoot\@oddfoot
}\makeatother

\begin{document}

\begin{frontmatter}

\title{On a generalization of Watson's trigonometric sum\\[1mm] 
 (on Dowker's sum of order one half)}

\author{Iaroslav V.~Blagouchine\corref{cor1}} 
\ead{iaroslav.blagouchine@univ-tln.fr}
\cortext[cor1]{Corresponding author.}

\begin{abstract}
In this paper we study the finite trigonometric sum $\sum a_l\csc\big(\pi l/n\big)\,$, 
where $a_l$ are equal to $\cos(2\pi l \nu/n)$ and
where the summation index $l$ and the discrete parameter $\nu$ both run through $1$ to $n-1$.
This sum is a generalization of \emph{Watson's trigonometric sum}, which has been 
extensively studied in a series of previous papers, and also may be regarded as
the so--called \emph{Dowker sum} of order one half.
It occurs in various problems in mathematics, physics and engineering,
and plays an important role in some number--theoretic problems.
In the paper, we obtain several integral and series representations for the above--mentioned sum, investigate its properties, 
derive various, including asymptotic, expansions for it, and deduce very accurate upper and lower bounds for it
(both bounds are asymptotically vanishing). 
In addition, we obtain two relatively simple approximate formulae containing only several terms, which are also very accurate
and can be particularly appreciated in applications.
Finally, we also derive several advanced summation formulae for the digamma functions, which relate 
the gamma and the digamma functions, the investigated sum, as well as the product of a sequence of cosecants
$\,\prod\big(\csc(\pi l/n)\big)^{\csc(\pi l/n)}$.
\end{abstract}

\begin{keyword}
finite trigonometric sums; cosecant sums; Watson's sum; Dowker's sum; Vinogradov sum, asymptotic representations;
asymptotic estimates; asymptotic expansions; digamma function; psi function, bounds; asymptotic estimates; approximations.
\end{keyword}

\end{frontmatter}

\section{Introduction}
\subsection{A short historical survey}
Finite trigonometric sums are an interesting object of study and often appear in analysis, discrete mathematics, 
combinatorics, number theory, applied statistics and in many other areas of mathematics.
They also often occur in applications, especially in physics and in a variety of related disciplines,
such as, for example, digital signal processing, computer science, information theory, telecommunications and cryptography,
see e.g.~\cite[Sect.~1.1]{iaroslav_18}.
Albeit many finite summation formulae are known and can be found in various handbooks and tables of series, 
see e.g.~\cite{gradstein_en,hansen_01,jolley_01,prudnikov_en}, such formulae still continue to attract the attention of mathematicians, 
see e.g.~\cite{allouche_04,allouche_05,annaby_01,beck_02,beck_01,berndt_04,berndt_05,berndt_06,bettin_01,bettin_02,byrne_01,chen_06,
chu_01,cvijovic_00,cvijovic_01,cvijovic_03,cvijovic_02,cvijovic_05,ejsmonta_01,fonseca_01,grabner_01,harshitha_01,he_01,raigorodskii_01,shkredov_01,stembridge_01}.
Indeed, very often, finite trigonometric sums cannot be evaluated in a closed--form at all.\footnote{By a closed--form expression
for the finite sum we mean a compact summation formula with a limited number of terms, which does not depend on the
length of the sum.} 
In such cases, it may be desirable to have a convenient asymptotic formula; notwithstanding, even the latter may be quite difficult.
We, for example, still do not know the asymptotics of many trigonometric sums related to the $\zeta$--function.

In 1916 the famous English mathematician George N.~Watson \cite{watson_02} considered the finite sum 
\be\label{984ycbn492}
S_n\equiv\sum_{l=1}^{n-1}  \csc\frac{\,\pi l\,}{n}\,,\qquad n\in\mathbbm{N}\setminus\{1\}\,,
\ee
which often occurs in mathematics, physics, and in a variety of related disciplines.
Watson have obtained the complete asymptotic expansion of this sum, allowing to calculate it quickly and 
accurately for large $n$ (see Remark~\ref{9843yndc3894} hereafter).
Watson, of course, was not the first who dealt with this sum; however,
for convenience, throughout the paper we refer to this sum as \emph{Watson's trigonometric sum}. 
The sum $S_n$ also appeared 
in many other works, including very recent ones, see e.g.~\cite[Sect.~1.1, p.~3]{iaroslav_18} for a nonexhaustive 
list of references. Furthermore, this sum has been found to be so remarkable, that Chen devoted a whole chapter 
of his book \cite[Chapt.~7]{chen_06} to it and to some of its properties.
Besides, as noted in \cite[Sec.~1]{tong_01}, there exist closed--form expressions for much more complicated sums
such as, for example, 
$\sum_{l=1}^{n-1} \csc^{2p}(\pi l/n)$, $p\in\mathbbm{N}$, or  $\sum_{l=1}^{n-1} \cos^{2q}(\theta l)\csc^{2p}(\pi l/n)$, 
$q,p\in\mathbbm{N}$,
see \cite[Chapt.~14]{chen_06}, \cite[Sec.~1]{tong_01}, \cite[vol.~1,\S~4.4.6]{prudnikov_en}, \cite{raigorodskii_01}, but still little is known about $S_n$.
In the above--cited paper \cite{iaroslav_18} we partially addressed this issue by examining in detail $S_n$ and by studying its properties, 
and also investigated its generalization
\be\label{894cbhc3ef}
\sum_{l=1}^{n-1}  \csc\!\left(\!\varphi+\frac{\,a\pi l\,}{n}\!\right) \,,\quad
\qquad \varphi+\frac{\,a\pi l\,}{n}\neq\pi k\,,\quad k\in\mathbbm{Z}\,, 
\ee
where $\varphi$ and $a$ are some parameters, the initial phase and the scaling factor respectively. The study of this sum
permitted to obtain, \emph{inter alia}, three advanced summation formulae for the digamma function.
It was also discovered that at large $n$ this sum may have qualitatively different behaviour, 
depending on how the initial phase and the scaling factor are chosen. 
It was established that \eqref{894cbhc3ef} has four qualitatively different leading terms, 
which, as $\varphi$ and $a$ start to vary,
appear or dissapear depending on the relationship between $\varphi$ and $a$; as a result, as $n$ increases, 
the sum $\sum\csc\big(\varphi+a\pi l/n\big)$ may become sporadically large \cite[Sect.~2.4.3]{iaroslav_18}. 
There also were other generalizations and extensions of $S_n$, which have been treated in the mathematical literature. 
For instance, in 1922 Hargreaves \cite{hargreaves_01},
extended Watson's investigations to the sum of cubes
\be\label{4o5iv4uyg}
\sum_{l=1}^{n-1}  \csc^3\!\frac{\,\pi l\,}{n}\,,\qquad n\in\mathbbm{N}\setminus\{1\}\,,
\ee
and obtained its first leading terms when $n$ becomes large
\be\label{4o5iv4uyg2}
\sum_{l=1}^{n-1}  \csc^3\!\frac{\,\pi l\,}{n}\,=\,\frac{2}{\,\pi^3\,}
\left\{n^3\zeta(3)+3n\zeta(2)\!\left(\!\ln n +\gamma-\ln\frac\pi2-\frac16\right) \!\right\}+\ldots
\ee
In 1924 Watson \cite{watson_02} considered more general sums 
\be\label{4o5iv}
S_n^{(r)}\equiv\sum_{l=1}^{n-1}  \csc^r\!\frac{\,\pi l\,}{n}\,,\quad\qquad n,r\in\mathbbm{N}\setminus\{1\}\,,
\ee
completed the result of Hargreaves by finding the complete asymptotics of $S_n^{(3)}$ at large $n$, proved that
\be\label{liuoy879}
\sum_{l=1}^{n-1}  \csc^r\!\frac{\,\pi l\,}{n}\,\sim\,\frac{\,2n^r\zeta(r)\,}{\pi^r}\,,
\quad\qquad n,r\in\mathbbm{N}\setminus\{1\}\,,
\quad n\to\infty\,,
\ee
and noted that for even $r$ the above sum should also have a closed--form (note also the the latter formula does not apply to the case $r=1$, 
that once again indicates that the sum $S_n^{(1)}=S_n$ represents quite a special case).\footnote{Note that there are two misprints on p.~580 \cite{watson_02}
in the unnumbered formulae defining $S_n^{(2r+1)}$ and $S_n^{(2r)}$: both sums should start with $m=1$ instead of $m=0.$}
Interestingly, the existence of
the closed--form expressions for such sums was also noted by Euler over 170 years before Watson; in particular,
the following formula 
\be\label{89ybwetv3}
\sum_{l=0}^{n-1} \csc^2\!\left(\!\varphi+\frac{\,\pi l\,}{n}\!\right) = \,n^2\csc^2 n\varphi\,,
\qquad n\in\mathbbm{N}\,,\quad \varphi\neq\pi(k- \nicefrac{l}{n})\,,
\ee
may be credited to Euler, see \cite[Sect.~1.1, pp.~5--6]{iaroslav_18}. Making $\varphi\to0$, we immediately get
\be\label{89ybwetv3b}
\sum_{l=1}^{n-1} \csc^2\!\frac{\,\pi l\,}{n}\, = \,\lim_{\varphi\to0}\!\left\{n^2\csc^2 n\varphi - \csc^2\varphi\right\}=
\,\frac{\,n^2-1\,}{3}\,,
\qquad n\in\mathbbm{N}\,,
\ee
which naturally agrees with Watson's asymptotics \eqref{liuoy879}.\footnote{As far as we know, formula \eqref{89ybwetv3b} 
has not been given by Euler explicitly in his works, albeit it is a particular case of his results, so that sometimes it 
may be attributed to other mathematicians, see e.g.~footnote 9 in \cite{iaroslav_18}, or Remark 2 in \cite{allouche_04} and the references therein.}
For higher even powers $r$ Euler did not explicitly gave closed--form expressions, but indicated a way for obtaining them.\footnote{The
reader interested in such a kind of expressions may find them, for example, in \cite[Chapt.~14]{chen_06}, 
\cite[vol.~1,\S~4.4.6]{prudnikov_en}, \cite{raigorodskii_01}, \cite{fonseca_01}.}
It may be also noted that Watson's results on $S_n^{(r)}$ were independently rediscovered several times, including rediscoveries of 
particular cases of \eqref{liuoy879}; for instance, 40 years later Gardner, Fisher and Carlitz obtained Watson's result \eqref{liuoy879}, but only 
for even $r$ \cite{gardner_03,fisher_04,chu_01,chu_02}.
Generalizations similar to \eqref{4o5iv} also appeared in works of Berndt, Alzer, Williams, Apostol, Chu, Marini and of some others, see 
e.g.~\cite{chu_01}, \cite[p.~267]{sofo_01}, \cite{fonseca_01}, and the references given therein. A wide class of sums generalizing $S_n$, 
\be\label{89y4bxc980yr}
\sum_{l=1}^{n-1} \cos\frac{2\pi \nu l}{n}\cdot\csc^{2q}\frac{\,\pi l\,}{n}\,,
\quad\qquad \nu=0,1,\ldots,n-1\,, \qquad q\in\mathbbm{N}\,,
\ee
was considered and used in some applications by Dowker \cite{dowker_00}, \cite[p.~772, Eq.~14]{dowker_01}, 
who, among other things, found a closed--form expression for it in terms of the Bernoulli polynomials of higher order
$B_n^{(s)}(x)$ \cite[Eqs.~4--7]{dowker_02}, and later, in terms of the generalized Bernoulli polynomials \cite[Eqs.~1--2]{dowker_03}, 
see Sect.~\ref{notation} hereafter. Sum \eqref{89y4bxc980yr} is also sometimes reffered to as \emph{Dowker's sum} 
\cite[Eq.~1.1]{cvijovic_01}, \cite[Eq.~1.5]{he_01}, \cite[Eq.~4.47]{fonseca_01}.
Somewhat similar generalizations were also considered by Chu and by some other authors, see e.g.~\cite{chu_01} and the 
references therein.
There exist, of course, many other generalizations of Watson's trigonometric sum, but it is worth noting that sums and series
with secants and cosecants are often very difficult to study, even asymptotically. 
For instance, Chu \cite[p.~137]{chu_01} studying sums similar to \eqref{89y4bxc980yr} remarks that 
\emph{``However, the resulting expressions will not be reproduced due to their complexity.''}
Furthermore, one can recall that we still do not know wherever the \emph{Flint Hills series}
$\,\sum l^{-3}\csc^2 l\,$ converges or not \cite[pp.~57--59 \& 265--268]{pickover_01},
\cite{alekseyev_01}, \cite{wolfram_03}. 

The aim of this paper is to investigate another generalization of Watson's sum \eqref{984ycbn492}, namely the trigonometric sum
\be\label{984ycbn492v3}
C_n(\nu)\,\equiv\sum_{l=1}^{n-1} \cos\frac{2\pi \nu l}{n}\cdot\csc\frac{\,\pi l\,}{n}\,,
\qquad\quad 
\nu=0,1,2,\ldots,n-1\,,
\ee
and its particular case
\be\label{984ycbn492v4}
C_n\,\equiv\sum_{l=1}^{n-1} \left(-1\right)^{l+1}\csc\frac{\,\pi l\,}{n}\,,
\ee
which may be obtained from $C_n(\nu)$ by setting $\nu=\frac12n$ when $n$ is even.
If $n$ is odd, then by symmetry $C_n=0.$ Note that the particular case $\nu=0$ is uninteresting for us,
because $C(n,0)=S_n$; the latter sum being already mentioned here and being extensively studied
in a series of previous papers. 
Thus, everywhere below, except if stated otherwise, we suppose that $\nu$ is not congruent to 0 modulo $n$.
Remark also that a similar sum with sines 
\be\label{984ycbn492v2}
\sum_{l=1}^{n-1} \sin\frac{2\pi \nu l}{n}\cdot\csc\frac{\,\pi l\,}{n}\,=\,0\,,
\qquad\quad 
\nu\in\mathbbm{Z}\,,
\ee
by virtue of symmetry. It is also worth noting that formally the sum $C_n(\nu)$ is Dowker's sum \eqref{89y4bxc980yr}
of order $q=\frac12$.
Besides, it may also be interesting to remark that formally, the sum $C_n(\nu)$ is also, 
up to one term and normalizing coefficients, the \emph{discrete cosine
transform} of a sequence of cosecants 
\be\notag
\left\{ \csc\frac{\,\pi\,}{n}\,,\;\csc\frac{\,2\pi l\,}{n}\,,\;\csc\frac{\,3\pi l\,}{n}\,,
\;\ldots\,,\;\csc\frac{\,\left(n-1\right)\pi l\,}{n}\right\},
\ee
and may, therefore, be regarded as the \emph{principal value} of this transform\footnote{The principal value
of a sum, $\pv$, is defined accordingly to \cite[Sect.~1.2]{iaroslav_18}.}.
Furthemore, since the similar sum of sines vanishes identically, $C_n(\nu)$ also gives, un to a coefficient,
the principal value of the \emph{discrete Fourier transform} and that of the \emph{discrete Hartley transform}.
The sums $C_n(\nu)$ and $C_n$ are not only interesting from the theoretical viewpoint, but also occur in applications.
For example, the former appears in an important number--theoretic problem related to the \emph{trigonometric P\'olya--Vinogradov sum} $f(n,k)$,
whose properties still remain little studied.\footnote{For further
reading on the trigonometric sum of P\'olya and Vinogradov, see e.g.~\cite[pp.~56 and 173--174]{vinogradov_01},
\cite[pp.~173--174]{apostol_01}, \cite{cochrane_01}, \cite{peral_01}, \cite{kongting_01}, \cite{cochrane_02}, \cite{alzer_01},
\cite{pomerance_01}.}
In fact, summing the geometric progression and then using the 
Fourier series expansion for $\left|\sin x\right|$, we see at once that 
\begin{eqnarray}
&\displaystyle
f(n,k)\,\equiv
\sum_{l=1}^{n-1} \left| \sum_{r=m}^{m+k-1} \exp\!\left(\frac{2\pi i l r}{n}\right) \right| \, = 
\sum_{l=1}^{n-1} 
\frac{\,\big|\sin(\pi l k/n)\big|\,}{\sin\left(\pi l/n\right)}  \notag\\[2.5mm]
&\displaystyle
= \,\frac{2 S_n}{\,\pi\,}\, 
-\, \frac{4}{\,\pi\,} \sum_{r=1}^{\infty} \frac{C_n(rk)}{\,4r^2-1\,}\,,\label{08undx3409}
\end{eqnarray}
where $n\in\mathbbm{N}\setminus\{1\}$, $m\in\mathbbm{N}$ and $k$ is a discrete parameter 
running through a complete residue system modulo $n$.

In the paper we provide several series and integral representations for $C_n(\nu)$ and $C_n$, establish their basic properties, 
obtain their asymptotic expansions (of two different kinds) and derive very accurate upper and lower bounds for them. 
We also obtain a useful approximate formula containing only three terms, which is very accurate and can be particularly appreciated in applications.
Much as in our previous research \cite{iaroslav_18}, we find that these sums are closely connected with the digamma function,
and with the square of the Bernoulli numbers, which is quite unusual.
Finally, we come to show that there exist summation relations between $C_n(\nu)$, the cosecant and its logarithm, as well as
the gamma and the digamma functions, \emph{viz.}
\begin{eqnarray}
&&\displaystyle \sum_{\nu=1}^{n-1}\Psi\!\left(\frac{\nu}{n}\right)\csc\frac{\,\pi \nu\,}{n}  \,= \,
- \left(\gamma+\ln2n\right)S_n  \, - \sum_{\nu=1}^{n-1}C_n(\nu)\ln\csc\frac{\pi \nu}{n}   \,, \label{i845nu8}  \\[3.5mm]
&&\displaystyle \sum_{\nu=1}^{n-1}\Psi\!\left(\frac{\nu}{n}\right)\csc\frac{\,\pi \nu\,}{n}\,=
\,-\left(\gamma+\ln2\pi n\right)S_n \, -\,2\!\sum_{\nu=1}^{n-1} \ln\Gamma\!\left(\frac{\nu}{n}\right) C_n(\nu)   \,, \label{iierwtgv}
\end{eqnarray}
\begin{eqnarray}
&&\displaystyle \sum_{\nu=1}^{n-1}\Psi\!\left(\frac{\nu}{n}\right)C_n(\nu)\,=
\,\left(\gamma+n\ln2\right)S_n\,\,-\,n \ln\prod_{\nu=1}^{n-1} \!\left(\csc\frac{\,\pi \nu\,}{n}\right)^{\!\csc\frac{\,\pi \nu\,}{n}}\,.\label{45g34g}
\end{eqnarray}
These relations complete the advanced summation formulae for the digamma function, which we obtained earlier in \cite[Appendix B, Eqs.~(B.6)--(B.11)]{iaroslav_07} and
in \cite[Eqs.~(12)--(15)]{iaroslav_18}. Two latter formulae are particularly beautiful. Equation \eqref{iierwtgv} relates
the values of the digamma function, evaluated at points uniformly distributed over the unit interval,
to those of the logarithm of the gamma function evaluated at same rational points; formula \eqref{45g34g} computes the product of a sequence 
of the form $a_l^{a_l}$, where each $a_l$ equals the cosecant of a rational part of $\pi$.

\subsection{Notation and conventions}\label{notation}
By definition, the set of natural numbers $\mathbbm{N}$ does not include zero. 
The symbol $\,\equiv\,$ means ``is defined by definition as'' and should be not confused 
with the conguence symbol from modular arithmetic.
Different special numbers are denoted as follows:
$\,\gamma\equiv\lim_{n\to\infty}\!\big(H_n-\ln n\big)=0.5772156649\ldots\,$
is the Euler constant, $\,H_n\equiv1+\nicefrac{1}{2}+\ldots+\nicefrac{1}{n}\,$
stands for the $n$th harmonic number,
${B}_n$ denotes the $n$th Bernoulli number. In particular
${B}_0=+1$, ${B}_1=\nicefrac{-1}{2}$, ${B}_2=\nicefrac{+1}{6}$,
${B}_3=0$, ${B}_4=\nicefrac{-1}{30}$, ${B}_5=0$, ${B}_6=\nicefrac{+1}{42}$, ${B}_7=0$,
${B}_8=\nicefrac{-1}{30}$, ${B}_9=0$, ${B}_{10}=\nicefrac{+5}{66}$,
${B}_{11}=0$, ${B}_{12}=\nicefrac{-691}{2730}\,,\ldots$\footnote{For 
further values and definitions, see \cite[Chapt.~2, \S~1]{norlund_02}, \cite[Chapt.~1, \S~1.1]{krylov_01}, \cite[Sect.~23]{abramowitz_01}, 
Note also that there exist slightly different definitions for the Bernoulli numbers, see e.g.~\cite[p.~91]{hagen_01},
\cite[pp.~32, 71]{lindelof_01}, \cite{watson_02,williams_01}, or \cite[pp.~3--6]{arakawa_01}.}
The Bernoulli numbers are the particular values of the Bernoulli polynomials, which are denoted by $B_n(x)$; these polynomials are defined either implicitly
via their generating function 
\be\notag
\frac{\,z\,e^{xz}}{\,e^z-1\,}\,=\sum_{n=0}^\infty \frac{B_n(x)}{n!} \, z^n\,,\quad\qquad |z|<2\pi\,,
\ee
or explicitly via the Bernoulli numbers and the binomial coefficients $\binom{n}{k}$
\be\notag
B_n(x)\,=\,B_n \,+\sum_{k=0}^{n-1} \binom{n}{k} \, x^{n-k} B_k\,.
\ee
Both definitions imply that $B_n=B_n(0)$. Furthermore, $B_n(x)$ are themselves the particular
case of the Bernoulli polynomials of higher order $B_n^{(s)}(x)$, \emph{viz.} $B_n(x)=B_n^{(1)}(x)$, see \cite[Chapt.~1, \S~1.2]{krylov_01},
\cite[Chapt.~2 \& 6]{norlund_02}, \cite[pp.~127--135]{milne_01}, \cite[p.~323, Eq.~(1.4)]{carlitz_01}, \cite{norlund_03}, 
\cite{norlund_01}, \cite[Sect.~23]{abramowitz_01}, \cite[Vol.~III, \S~19.7]{bateman_01}, \cite[Eq.~(4)]{cvijovic_04}, \cite[p.~16, Eq.~(52)]{iaroslav_10}.
We also make use of numerous abbreviations for the functions and series.
In particular, $\lfloor z\rfloor$ denotes the integer part of $z$, $\delta_{k,l}$ is the Kronecker delta of discrete variables $k$ and $l$,
$\operatorname{tg}z$, $\operatorname{ctg}z$, $\operatorname{ch}z$ and $\operatorname{sh}z$ stand for the tangent of $z$,
the cotangent of $z$, the hyperbolic cosine of $z$ and the hyperbolic sine of $z$ respectively.
Writings $\Gamma(s)$, $\Psi(s)$, $\Psi_n(s)$ and $\zeta(s)$ denote the gamma ($\Gamma$) function, the digamma (or $\Psi$) function,
the polygamma function of order $n$ and the Euler--Riemann zeta ($\zeta$) function of argument $s$ respectively.
In order to be consistent 
with the previous notation of Watson \cite{watson_02,watson_03} and of some other authors,
we denote by $S_n$ \emph{Watson's trigonometric sum} \eqref{984ycbn492}, and
by $C_n(\nu)$ and $C_n$ the sums \eqref{984ycbn492v3} and \eqref{984ycbn492v4} respectively. Occasionally, 
we make use of \emph{divergent series}; summability, summations methods and their regularity 
are defined accordingly to Hardy's monograph \cite{hardy_02}.
It is also supposed that the reader possesses, up to a certain point, a working knowledge of the theory of 
asymptotic expansions \cite{copson_01,dingle_01,erdelyi_01,evgrafov_03_eng,evgrafov_01_eng,olver_01}; 
the order symbols $O$, $o$ and the asymptotic equivalence symbol $\sim$ are defined accordingly to Evgrafov's and Erd\'elyi's books
\cite[Chapt.~1, \S~4]{evgrafov_01_eng}, \cite[Chapt.~1]{erdelyi_01}. 
By the error between the quantity $A$
and its approximated value $B$, we mean $A-B$. 
Finally, all the Figures in the paper were exported from the CAS Maple. The data used to trace the graps
were calculated with the $50$-digit ``precision'', enabled with the help of the command \texttt{Digits:=50}; some graphs and results were also 
verified independently with the help of \textsc{Matlab} and the CAS \mbox{Mathematica.}\footnote{It should be noted that such a ``precision''
($50$ digits) is not guaranteed at all by Maple. For instance, by computing the approximation error for the graph in Fig.~\ref{937fybfe5} with 
the $8$--digit precision (which should, in principle, be largely sufficient to corectly trace a graph), we first obtained completely 
erroneous values. Writing in Maple 12 \texttt{evalf($\epsilon$(300,34),8)}, where $\epsilon(n,\nu)$ is the approximation error, 
returned the value \texttt{-0.3136e-4}, while the correct value is close to \texttt{-5e-011}. In other words, in some situations, 
Maple 12 fails to correctly compute even the order of magnitude when using the $8$-digit precision.}
\looseness=-1

\section{Basic properties}\label{h09387rxhxdws}
First of all, we note that albeit $\nu$ may take only discrete values,
if $\nu$ was a continuous complex variable, then $C_n(\nu)$ would be analytic
in the whole complex plane, except the case $n\to\infty$.
Furthermore, $C_n(\nu)$, defined in \eqref{984ycbn492v3} only for $\,\nu=0,1,2,\ldots,n-1\,$, 
may easily be extended to any integer $\nu$ by means of the formulae
\be\label{9348yb394y}
C_n(\nu+mn)\,=\,
\left\{
\begin{array}{llll}
\, S_n\,,\qquad 		& \nu=0 \,,\qquad 			& m\in\mathbbm{Z}\,,\\[4mm]
\,C_n(\nu)\,,\qquad 		& \nu=1,2,\ldots,n-1\,,\qquad 	& m\in\mathbbm{Z}\,,
\end{array}
\right.
\ee
In addition, $C_n(\nu)$ has also the following basic properties:
\begin{eqnarray}
&&\displaystyle C_n(\nu)\,=\, C_n(n-\nu) \,, \qquad\quad\qquad  C_{2n-1} =0\,,\label{39ufif}\\[3.5mm]
&&\displaystyle -C_{2n}\leqslant C_{2n}(\nu)\,, \qquad\qquad C_n(\nu)\leqslant S_n\,, 
\qquad\qquad 0< C_{2n}< S_{2n}\,,   \label{98743cb38} \\[3.5mm]
&&\displaystyle C_n(\nu+1)\,=\, C_n(\nu) \,-\, 2\ctg\frac{\, \left(2\nu+1\right)\pi\,}{2n} \,, \label{3984ycbf34y}\\[3.5mm]
&&\displaystyle C_n(\nu+\kappa)\,=\, C_n(\nu) \,-\, 2\sum_{l=1}^\kappa\ctg\frac{\, \left(2\nu+2l-1\right)\pi\,}{2n} \,,
\qquad\quad\kappa\in\mathbbm{N}\,, \label{jkhbwc8qfc032}\\[3.5mm]
&&\displaystyle C_n(\nu+\kappa)\,=\, C_n(\kappa) \,-\, 2\sum_{l=1}^\nu\ctg\frac{\, \left(2\kappa+2l-1\right)\pi\,}{2n} \,,
\qquad\quad\kappa\in\mathbbm{N}\,, \label{834cn384d3f}\\[3.5mm]
&&\displaystyle C_n(2\nu)\,=\, C_n(\nu) \,-\, 2\sum_{l=1}^\nu\ctg\frac{\, \left(2\nu+2l-1\right)\pi\,}{2n} \,, \label{iwehnc83}\\[3.5mm]
&&\displaystyle C_n(\nu)-C_n(\kappa)\,=\, -\, 2\!\!\!\sum_{l=\kappa+1}^\nu \!\! \ctg\frac{\, \left(2l-1\right)\pi\,}{2n} \,, 
\qquad\quad \nu>\kappa \,, \label{iercrwev}\\[3.5mm]
&&\displaystyle \sum_{\nu=1}^{n-1}C_n(\nu)\,=\, -S_n \,, \qquad\qquad\quad
\sum_{\nu=1}^{n-1}C^2_n(\nu)\,=\, \frac{\,n\left(n^2-1\right)\,}{3}  - \,S^2_n \,,\label{4938cn349y}\\[3.5mm]
&&\displaystyle \sum_{\nu=1}^{n-1}C_n(\nu)\cos\frac{2\pi \nu k}{n}\,=\, n\csc\frac{\,\pi k\,}{n} -S_n \,, \qquad
\sum_{\nu=1}^{n-1}C_n(\nu)\sin\frac{2\pi \nu k}{n}\,=\, 0 \,,\label{4ouedy2}\\[3.5mm]
&&\displaystyle \sum_{\nu=1}^{n-1}C_n(\nu)\ctg\frac{\pi \nu}{n}\,=\, 0 \,, \qquad\qquad\qquad
\sum_{\nu=1}^{n-1} \nu \,C_n(\nu)\,=\, -\frac{\,n \,S_n\,}{2} \,,\label{4ouedy}\\[3.5mm]
&&\displaystyle \sum_{\nu=1}^{n-1}C_n(\nu)\ln\sin\frac{\pi \nu}{n}\,=\, \left(\gamma+\ln2n\right)S_n\,
+\sum_{r=1}^{n-1}\Psi\!\left(\frac{r}{n}\right)\csc\frac{\,\pi r\,}{n} \,, \label{iwytb877}  \\[3.5mm]
&&\displaystyle \sum_{\nu=1}^{n-1}C_n(\nu)\ln\Gamma\!\left(\frac{\nu}{n}\right)\,=
\,-\frac{\,\left(\gamma+\ln2\pi n\right)S_n\,}{2}\,-\,\frac{1}{\,2\,}\!\sum_{\nu=1}^{n-1}
\Psi\!\left(\frac{\nu}{n}\right)\csc\frac{\,\pi \nu\,}{n}   \,, \label{iiytb8t3} \\[3.5mm]
&&\displaystyle \sum_{\nu=1}^{n-1}C_n(\nu)\Psi\!\left(\frac{\nu}{n}\right)\,=
\,\left(\gamma+n\ln2\right)S_n\,\,-\,n\sum_{\nu=1}^{n-1} \csc\frac{\,\pi \nu\,}{n} \cdot
\ln\csc\frac{\pi \nu}{n}  \,, \label{i56vg23543} 
\end{eqnarray}
$k=1,2,\ldots,n-1$\,, which may be obtained without much difficulty from the definition of $C_n(\nu)$.
For instance, the recurrence relationship \eqref{3984ycbf34y} is obtained as follows:
\begin{eqnarray}
&&\displaystyle \notag
C_n(\nu+1)\,=\sum_{l=1}^{n-1} \cos\!\left(\frac{2\pi \nu l}{n}+\frac{2\pi l}{n}\right)\csc\frac{\,\pi l\,}{n} =
\sum_{l=1}^{n-1} \cos\frac{2\pi \nu l}{n}\cdot\csc\frac{\,\pi l\,}{n}\cdot\cos\frac{2\pi l}{n} - \\[3.5mm]
&&\displaystyle\notag\quad
- \sum_{l=1}^{n-1} \sin\frac{2\pi \nu l}{n}\cdot\csc\frac{\,\pi l\,}{n}\cdot\sin\frac{2\pi l}{n} =
\sum_{l=1}^{n-1} \cos\frac{2\pi \nu l}{n}\cdot\csc\frac{\,\pi l\,}{n}\cdot\left(\!1-2\sin^2\frac{\pi l}{n}\right) - \\[3.5mm]
&&\displaystyle\notag\quad
- \, 2\!\sum_{l=1}^{n-1} \sin\frac{2\pi \nu l}{n}\cdot\cos\frac{\pi l}{n} 
=\, C_n(\nu) \,- \,2\!\sum_{l=1}^{n-1} \sin\frac{\,(2\nu+1)\pi l\,}{n}  \\[3.5mm]
&&\displaystyle\qquad\qquad\qquad\qquad\qquad\quad\; =\,C_n(\nu)  \,-\,2\ctg\frac{\,(2\nu+1)\pi\,}{2n}\,,
\end{eqnarray}
where at the final step we accounted for a well--known result
\be\label{0imxd143i9m}
\sum_{l=1}^{n-1} \sin\frac{\,\pi r l\,}{n}\, =
\begin{cases}
\ctg\dfrac{\,\pi r\,}{2n} \,,\qquad  	& r=1,3,5,\ldots\\[1mm]
0\,,			\qquad 			& r=2,4,6,\ldots
\end{cases}
\ee
Using repeatedly \eqref{3984ycbf34y} for $C_n(\nu+2)$, $C_n(\nu+3)$ and so on, we obtain \eqref{jkhbwc8qfc032}.
Since the sum is commutative, we also have \eqref{834cn384d3f}. The duplication formula \eqref{iwehnc83} is obtained from \eqref{jkhbwc8qfc032}
by setting $\kappa=\nu$. Identities \eqref{4938cn349y} \emph{et seq.} follow from various summation and orthogonality properties of the cosine.
For instance, the second of properties \eqref{4938cn349y} is obtained by means of the following orthogonality formula
\be\notag
\sum_{\nu=0}^{n-1} \cos\frac{2\pi \nu k}{n}\cdot \cos\frac{2\pi \nu l}{n}\,=\,\frac{n}{\,2\,}\Big\{\delta_{k,l}+\delta_{k,n-l}\Big\},
\ee
where both discrete variables $k$ and $l$ run through $1$ to $n-1$, as well as with the help of \eqref{89ybwetv3b}.
The summation formulae involving the gamma and the digamma functions are obtained as follows.
Gauss' digamma theorem states that
\be\label{9urmc039}
\Psi \biggl(\!\frac{l}{n} \vphantom{\frac{1}{2}} \!\biggr) =\,-\gamma-\ln2n-\frac{\pi}{2}\ctg\frac{\,\pi l\,}{n} 
+ \sum_{\nu=1}^{n-1} \cos\frac{\,2\pi \nu l \,}{n} \cdot\ln\sin\frac{\pi \nu}{n}\,,
\ee
where $l=1, 2,\ldots, n-1 $, $n=2,3,4,\ldots\,$, see e.g.~\cite[Eqs.~(B.4b)]{iaroslav_07} or 
\cite[Vol.~I, Sec.~1.7.3, Eq.~(29)]{bateman_01}. Therefore
\be
\begin{array}{ll}
\displaystyle
\sum_{\nu=1}^{n-1}C_n(\nu)  & \displaystyle \!\ln\sin\frac{\pi \nu}{n}  =
\sum_{l=1}^{n-1} \csc\frac{\,\pi l\,}{n} \sum_{\nu=1}^{n-1} \cos\frac{2\pi \nu l}{n} \cdot\ln\sin\frac{\pi \nu}{n} \\[6mm]
& \displaystyle =\sum_{l=1}^{n-1}  \left\{\Psi \biggl(\!\frac{l}{n} \vphantom{\frac{1}{2}} \!\biggr)  
+\gamma+\ln2n+\frac{\pi}{2}\ctg\frac{\,\pi l\,}{n} \right\}\csc\frac{\,\pi l\,}{n} \, =
\end{array}
\ee
\be
\begin{array}{ll}
\displaystyle
& \displaystyle = \left(\gamma+\ln2n\right) S_n \, + 
\sum_{l=1}^{n-1} \Psi \biggl(\!\frac{l}{n} \vphantom{\frac{1}{2}} \!\biggr)  \csc\frac{\,\pi l\,}{n} + 
\,\frac{\pi}{2}\sum_{l=1}^{n-1} \cos\frac{\,\pi l\,}{n} \cdot \csc^2\frac{\,\pi l\,}{n}\,.
\end{array}
\ee
Remarking that the latter sum vanishes by virtue of symmetry yields \eqref{iwytb877}.
Formula \eqref{iiytb8t3} is derived analogously by using Malmsten's variant of Gauss' digamma theorem \cite[Eqs.~(B.4c)]{iaroslav_07}
instead of \eqref{9urmc039}. Finally, identity \eqref{i56vg23543} follows 
from Gauss' digamma theorem \eqref{9urmc039} and \eqref{4ouedy}.
Note, by the way, that many properties are related to Watson's trigonometric sum.

\begin{figure}[!t]   
\centering
\includegraphics[width=0.8\textwidth]{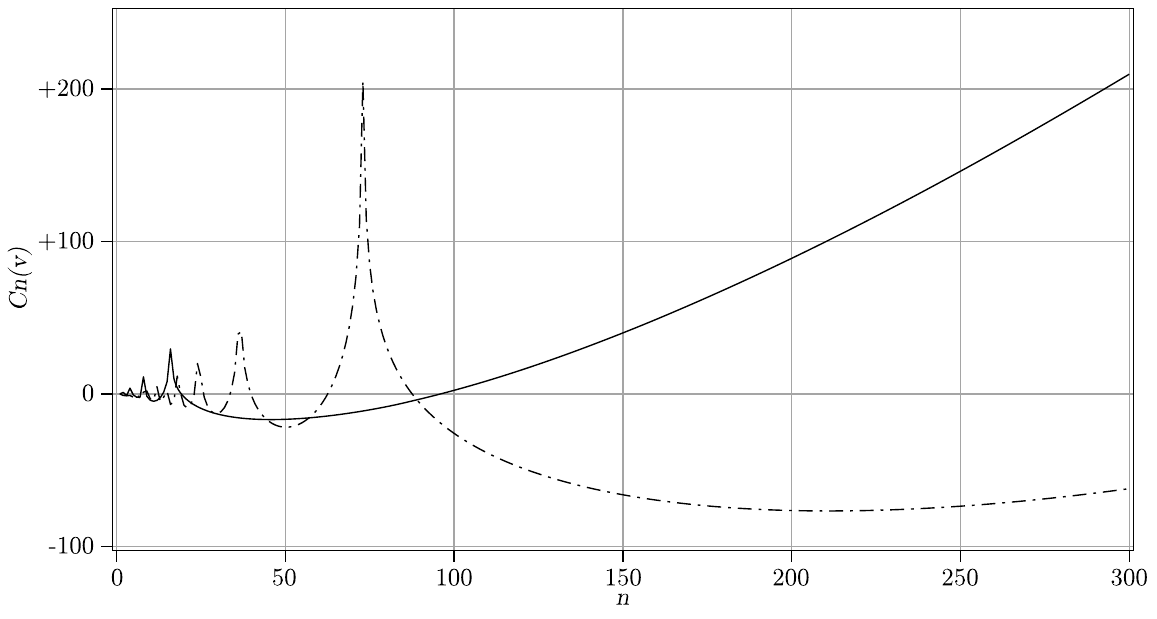}
\vspace{-0.5em}
\caption{The sum $C_{n}(\nu)$ as a function of $n$, where $n\in[2,300]$, for two different values of argument: $\nu=16$ (solid line)
and $\nu=73$ (dash--dotted  line). We deliberately take values of $\nu$, which lie outside the interval defined earlier,
in order to observe the overall behaviour of $C_{n}(\nu)$.
}
\label{g6hytbhfw}
\end{figure}

We conclude this section with two graphs of $C_n(\nu)$, depicted in Figs.~\ref{g6hytbhfw} and~\ref{g6hytbhfw2}.
The former figure displays the graph of $C_n(\nu)$ as a function of $n$ for two values of $\nu$.
It is quite remarkable to observe, see Fig.~\ref{g6hytbhfw}, that when $n<\nu$, as $n$ approaches the values having common divisors with $\nu$,
the sums $C_n(\nu)$ reaches a local maximum. 
It can also be clearly observed, see Fig.~\ref{g6hytbhfw2}, 
that $C_n(\nu)$ reaches the minimum when $\nu=\frac12n=150$, see property \eqref{98743cb38} and remark that $-C_{300}\approx-132$, 
and the maximum at the endpoints, at which $C_{300}(\nu)$ becomes equal to $S_{300}\approx1113$ (we do not show them on the graph, 
since they are too high).
The explanation of these interesting phenomena is given on p.~\pageref{iuo4ycn39v2} and follows from Theorem~\ref{iuo4ycn39}.

\section{Integral representation}
Below, we derive an important integral representation for the sum $C_n(\nu)$, which is useful for the
establishment of some further properties.
\begin{theorem}[Integral representations for $\bm{C_n(\nu)}$]\label{jh97yb876vb}
The sum $C_n(\nu)$, as defined in \eqref{984ycbn492v3}, may be represented via these integrals, 
containing the discrete Poisson kernel
\be\notag
\sum_{l=1}^{n-1} \cos\frac{2\pi \nu l}{n}\cdot\csc\frac{\,\pi l\,}{n} \,= \, \frac{\,2n\,}{\,\pi\,} 
\!\!\!\bigints\limits_{\!\!\!\!\!\!\!\!\!0}^{\;\;\;\;\;\;\,1} \!\! 
\frac{\,\big(1+x^n\big)\cos\dfrac{2\pi \nu}{n} - x^{n-1} - x \, }{1+x^n} \cdot
\frac{dx}{\,x^2-2x\cos\dfrac{2\pi \nu}{n}+1\,} \, ,
\ee

which may also be written in several alternative forms; for example,
\be\notag
\sum_{l=1}^{n-1} \cos\frac{2\pi \nu l}{n}\cdot\csc\frac{\,\pi l\,}{n} \,= \, \big(n-2 \nu\big)\ctg\frac{2\pi \nu}{n}
\,- \,\frac{\,n\,}{\,\pi\,} 
\!\!\!\bigints\limits_{\!\!\!\!\!\!\!\!\!0}^{\;\;\;\;\;\;\,\infty} \!\! 
\frac{\ch\left[x\!\left(\dfrac{n}{2}-1\right)\right] }{\,\ch\dfrac{xn}{2}\left(\ch x - \cos\dfrac{2\pi \nu}{n}\right)}\, dx \, .
\ee
\end{theorem}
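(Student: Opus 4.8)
The plan is to reduce the cosecant to a Mellin-type integral, collapse the resulting finite geometric series in closed form (which yields the Poisson-kernel representation at once), and then pass to the hyperbolic form by the substitution $x=e^{-u}$ together with one elementary quadrature. First I would use the classical formula
\[
\csc\frac{\pi l}{n}=\frac{n}{\pi}\int_0^1\frac{x^{l-1}+x^{n-l-1}}{1+x^n}\,dx\,,\qquad l=1,2,\ldots,n-1\,,
\]
which follows at once from $\int_0^\infty x^{s-1}(1+x^n)^{-1}dx=\pi/(n\sin(\pi s/n))$, $0<s<n$ (a consequence of the reflection formula for $\Gamma$), after folding the half-line at $x=1$ by $x\mapsto1/x$. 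Inserting this into the definition of $C_n(\nu)$, interchanging the finite sum with the integral, and replacing $l$ by $n-l$ in the $x^{n-l-1}$-part of the inner sum while using $\cos\bigl(2\pi\nu(n-l)/n\bigr)=\cos(2\pi\nu l/n)$ (valid since $\nu\in\mathbbm{Z}$), one finds that the two parts of the sum coincide, so that $C_n(\nu)=\frac{2n}{\pi}\int_0^1\frac{1}{x(1+x^n)}\sum_{l=1}^{n-1}\cos\frac{2\pi\nu l}{n}\,x^l\,dx$.

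Next I would compute the inner sum by writing $\cos(2\pi\nu l/n)\,x^l=\Re\bigl(xe^{2\pi i\nu/n}\bigr)^l$ and summing the finite geometric progression; this is admissible because $xe^{2\pi i\nu/n}\neq1$ on $(0,1]$ once $\nu\not\equiv0\pmod{n}$, and the crucial simplification is $e^{2\pi i\nu}=1$, which turns the top power into $x^n$. Rationalising and taking the real part gives
\[
\sum_{l=1}^{n-1}\cos\frac{2\pi\nu l}{n}\,x^l=\frac{x\cos\frac{2\pi\nu}{n}(1+x^n)-x^2-x^n}{x^2-2x\cos\frac{2\pi\nu}{n}+1}\,,
\]
whose denominator --- the discrete Poisson kernel --- is strictly positive on $(0,1)$, since its roots $e^{\pm2\pi i\nu/n}$ are non-real under our hypothesis on $\nu$. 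Substituting back and cancelling the factor $x$ produces exactly the first formula of the theorem; not folding the half-line at the outset yields instead the same integrand integrated over $(0,\infty)$ with prefactor $n/\pi$, which is one of the ``alternative forms''.

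To obtain the hyperbolic representation I would start from that $(0,\infty)$ integral and set $x=e^{-u}$. Using $1+x^n=2e^{-nu/2}\ch\frac{nu}{2}$, $\;x^2-2x\cos\frac{2\pi\nu}{n}+1=2e^{-u}\bigl(\ch u-\cos\frac{2\pi\nu}{n}\bigr)$ and $(1+x^n)\cos\frac{2\pi\nu}{n}-x^{n-1}-x=2e^{-nu/2}\bigl[\cos\frac{2\pi\nu}{n}\ch\frac{nu}{2}-\ch((\tfrac n2-1)u)\bigr]$, the exponential prefactors cancel, the integrand is seen to be even, and splitting it yields
\[
C_n(\nu)=\frac{n}{\pi}\int_0^{\infty}\frac{\cos\frac{2\pi\nu}{n}}{\ch u-\cos\frac{2\pi\nu}{n}}\,du\;-\;\frac{n}{\pi}\int_0^{\infty}\frac{\ch\bigl[(\tfrac n2-1)u\bigr]}{\ch\frac{nu}{2}\bigl(\ch u-\cos\frac{2\pi\nu}{n}\bigr)}\,du\,.
\]
The second term is precisely the integral in the statement. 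For the first, I would use $\int_0^\infty(\ch u-\cos\theta)^{-1}du=(\pi-\theta)/\sin\theta$ for $0<\theta<\pi$ --- reduce it, via $t=e^{-u}$, to $2\int_0^1(t^2-2t\cos\theta+1)^{-1}dt$, an elementary $\arctan$ --- so that $\frac{n}{\pi}\cos\frac{2\pi\nu}{n}\cdot\frac{\pi-2\pi\nu/n}{\sin(2\pi\nu/n)}=(n-2\nu)\ctg\frac{2\pi\nu}{n}$ when $2\nu<n$; the range $2\nu>n$ then follows from the symmetry $C_n(\nu)=C_n(n-\nu)$, under which both the surviving integral and $(n-2\nu)\ctg(2\pi\nu/n)$ are invariant, and $\nu=\tfrac n2$ is recovered by continuity.

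The routine caveats --- absolute convergence of the $(0,\infty)$ integral (the integrand is bounded near $0$ and $O(x^{-2})$ near $\infty$), legitimacy of the sum--integral interchange (a finite sum), and the absence of real poles --- all reduce to the single standing assumption $\nu\not\equiv0\pmod{n}$. The one genuinely delicate step is the $\arctan$ evaluation of $\int_0^\infty(\ch u-\cos\theta)^{-1}du$: one must track the correct branch so that $(\pi-\theta)/\sin\theta$ emerges, and then reconcile it across $\nu=n/2$ through the reflection $\nu\mapsto n-\nu$. This is exactly where the factor $n-2\nu$ --- which, unlike $\cos(2\pi\nu/n)$, is \emph{not} a function of $2\pi\nu/n$ alone --- acquires its precise form.
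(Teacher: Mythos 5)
Your proof is correct and follows essentially the same route as the paper's: Euler's integral for the cosecant, summation of the finite geometric progression to produce the Poisson kernel, folding of the half-line, the exponential substitution, and the elementary evaluation of $\int_0^\infty(\ch u-\cos\theta)^{-1}du$. The only (immaterial) differences are that you perform the substitution $y=x^{1/n}$ before summing rather than after, and that you recover the range $\pi<\theta<2\pi$ by the symmetry $\nu\mapsto n-\nu$ instead of using the elementary integral on its full range of validity.
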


\begin{figure}[!t]   
\centering
\includegraphics[width=0.8\textwidth]{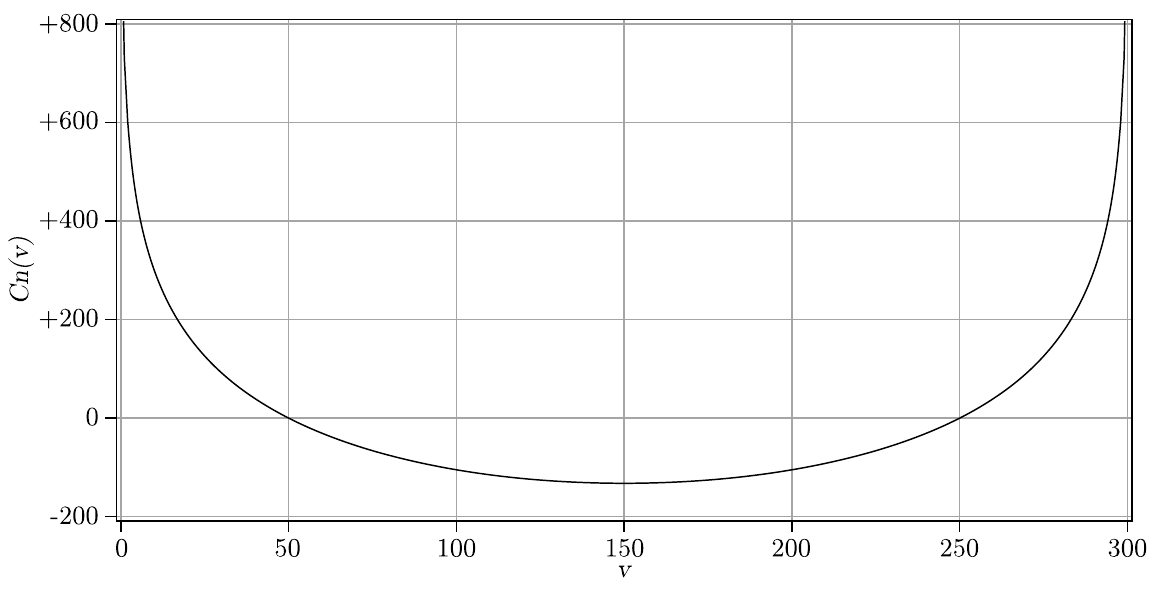}
\vspace{-0.5em}
\caption{The sum $C_{300}(\nu)$ as a function of $\nu$, where $\nu\in[0,300]$. We clearly observe that it
verifies the basic property $C_{n}(\nu)=C_{n}(n-\nu)$, see \eqref{39ufif}, and that it may take both positive and negative values. 
It is also visible that the minimum of $\left|C_{n}(\nu)\right|$ occurs at $\nu=50=\frac16n$ and $\nu=250=\frac56n$. More generally,
empirical studies show that at such points $C_{n}(\nu)$ is always very small, never equal to zero, but tends to zero when $n\to\infty$,
$n$ being a multiple of 6 (see also Corollary~\ref{lpo3sai} in Sect.~\ref{2093u0j}). 
}
\label{g6hytbhfw2}
\end{figure}

\begin{proof}
Let $x$ and $\varphi$ be two real variables such that $xe^{i\varphi}\neq1$. On summing the geometric progression
\be\notag
\sum_{l=1}^{n-1} x^l e^{i\varphi l}\,=\,\sum_{l=1}^{n-1} \big(x e^{i\varphi}\big)^l\,
=\,\frac{\, x^ne^{i\varphi n} - xe^{i\varphi}\,}{xe^{i\varphi}-1}\,, \qquad n=2,3,4,\ldots
\ee
and then on separating the real and the imaginary parts, we obtain
\begin{eqnarray}
&&\displaystyle \label{874db3478v1}
\sum_{l=1}^{n-1} x^l \cos\varphi l \,=\,\frac{\,x^{n+1}\cos\varphi(n-1)-x^n\cos\varphi n +x\cos\varphi-x^2\,}{x^2-2x\cos\varphi +1}
\end{eqnarray}
and
\begin{eqnarray}
&&\displaystyle \label{874db3478v2}
\sum_{l=1}^{n-1} x^l \sin\varphi l \,=\,\frac{\,x^{n+1}\sin\varphi(n-1)-x^n\sin\varphi n +x\sin\varphi\,}{x^2-2x\cos\varphi +1}
\end{eqnarray}
respectively. 
Consider now the following well--known result due to Euler:
\be\label{hd2893dh2}
\int\limits_0^\infty \!\frac{\,x^{p-1}}{\,1+x\,} \, dx\,=\,\pi\csc p\pi\,,\qquad 0<\Re{p}<1 \,,
\ee
see e.g.~\cite[\no 3.222-2]{gradstein_en}, \cite[\no 856.02]{dwigth_01_en}, \cite[p.~170 \& p.~172, \no 5.3.4.20-1]{mitrinovic_02}, 
\cite[p.~125, \no 878]{volkovyskii_01_eng}. 
Setting $\,p=l/n\,$, $n\in\mathbbm{N}\setminus\{1\}$, multiplying both sides by $\cos\varphi l$\,, putting $\,\varphi=2\pi \nu/n\equiv\theta\,$,
and then summing the result from $l=1$ to $l=n-1$, we have by virtue of \eqref{874db3478v1}
\be\label{9034dyh3y4r}
\begin{array}{ll}
\displaystyle
\sum_{l=1}^{n-1} \cos\theta l \cdot\csc\frac{\,\pi l\,}{n} \, & \displaystyle =\,\frac{\,1\,}{\pi}\!
\int\limits_0^\infty \! \frac{\,1}{\,x(1+x)\,} \sum_{l=1}^{n-1} x^\frac{l}{n} \cos\theta l  \, dx\, \\[8mm]
& \displaystyle
=\,\frac{\,1\,}{\pi}\! \int\limits_0^\infty 
\frac{\,x^{\frac1n+1}\cos\theta - x +x^{\frac1n}\cos\theta-x^{\frac2n}\,}{x^{\frac2n}-2x^{\frac1n}\cos\theta +1}
\cdot \frac{\,dx\,}{\,x(1+x)\,} \\[8mm]
& \displaystyle
=\,\frac{\,n\,}{\pi}\! \int\limits_0^\infty \!
\frac{\,\,y^{n}\cos\theta - y^{n-1} + \cos\theta - y\,}{ \,\big(1+y^n\big) \big(y^2-2y\cos\theta +1\big)\,}
\, dy\,  \\[8mm]
& \displaystyle
=\,\frac{\,2n\,}{\pi}\! \int\limits_0^1 \!
\frac{\,\,\big(1+y^n\big) \cos\theta - y^{n-1} - y\,}{\, \big(1+y^n\big) \big(y^2-2y\cos\theta +1\big)\,}
\, dy\, ,
\end{array}
\ee
where we made a change of variable $y=x^{\frac1n}$ and then split the interval of integration into two parts
$[0,1]$ and $[1,\infty)$, the integral over $[1,\infty)$ being equal to that over $[0,1]$.
We, thus, have arrived at the first result of the Theorem.

Making once again a change of variable $y=e^{-t}$ in the last integral in \eqref{9034dyh3y4r}, we get
\be\label{08u4nvc38}
\begin{array}{ll}
\displaystyle 
C_n(\nu)  & \displaystyle =\,\frac{\,2n\,}{\pi}\! \int\limits_0^1 \!
\frac{\,\big(1+y^n\big) \cos\theta - y^{n-1} - y\,}{\, \big(1+y^n\big) \big(y^2-2y\cos\theta +1\big)\,}\,dy \notag\\[8mm]
\displaystyle 
\: & \displaystyle =\,\frac{\,n\,}{\pi}\! \int\limits_0^\infty \!
\frac{\,\ch\frac12tn \cdot\cos\theta  - \ch\big[t\big(\frac12n-1\big)\big]\,}{\,\big(\ch t  - \cos\theta \big) 
\ch\frac12tn \,} \,dt\,.
\end{array}
\ee
The final step, of accounting for the elementary integral\footnote{See e.g.~\cite[\no 2.444-2]{gradstein_en} or \cite[\no 859.163]{dwigth_01_en}.}
\be\notag
\int\limits_0^\infty \frac{dt}{\,\ch t - \cos\varphi\,}\,=\,
\begin{cases}
\displaystyle \frac{\,\pi-\varphi\,}{\sin\varphi}\,,\qquad & 0<\varphi<2\pi\,, \quad \varphi\neq\pi \,,\\[3mm]
\displaystyle 1\,,\qquad & \varphi=\pi\,,
\end{cases}
\ee
evaluated at $\varphi=\theta$,
yields the second result of the Theorem. Note that both results of the Theorem hold only for integer $\nu$, although the above method can,
of course, be extended to the continuous values of $\nu$ as well, in which case the resulting expression will be more complicated.\footnote{The assumption
of discrete $\nu$ permits to greatly simplify our calculations in the second line of \eqref{9034dyh3y4r}.}
\end{proof}

\section{Series representations}
In this section we obtain several series representations for $C_n(\nu)$.
We begin with four alternative finite series representations. 

\begin{theorem}[Finite series representations for $\bm{C_n(\nu)}$]\label{iuo4ycn39}
The finite sum $C_n(\nu)$ and Watson's trigonometric sum $S_n$ are directly related to each other by means
of these four nonexhaustive formulae
\be\notag
\begin{array}{ll}
\displaystyle
\sum_{l=1}^{n-1} \cos\frac{2\pi \nu l}{n}\cdot\csc\frac{\,\pi l\,}{n}\:&\displaystyle= \, S_n \,-\,
2\sum_{l=1}^{\nu} \ctg\frac{\,\left(2l-1\right)\pi\,}{2n}\,,\\[7mm]
\displaystyle
\sum_{l=1}^{n-1} \cos\frac{2\pi \nu l}{n}\cdot\csc\frac{\,\pi l\,}{n}\:&\displaystyle= \, S_n \,-\,
2\sum_{l=1}^{n-1} \sin^2\frac{\pi \nu l}{n}\cdot\csc\frac{\,\pi l\,}{n}\,,\\[7mm]
\displaystyle
\sum_{l=1}^{n-1} \cos\frac{2\pi \nu l}{n}\cdot\csc\frac{\,\pi l\,}{n}\: &\displaystyle= \,
-\,S_n \,+\,
2\sum_{l=1}^{n-1} \cos^2\frac{\pi \nu l}{n}\cdot\csc\frac{\,\pi l\,}{n}\,,\\[7mm]
\displaystyle
\sum_{l=1}^{n-1} \cos\frac{2\pi \nu l}{n}\cdot\csc\frac{\,\pi l\,}{n}\:&\displaystyle= \,S_n  -\ctg\frac{\,\pi\,}{2n}+\ctg\frac{\,\pi\left(2\nu+1\right)\,}{2n}\,-\\[7mm]
&\displaystyle\qquad
-2\sum_{l=1}^{n-1} \ctg\frac{\,\pi l\,}{n}\cdot\sin\frac{\pi l \nu}{n}\cdot\sin\frac{\pi l \left(\nu+1\right)}{n}\,.
\end{array}
\ee
\end{theorem}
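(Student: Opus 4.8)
The plan is to establish the four identities one by one, using only the recurrence \eqref{3984ycbf34y} and elementary trigonometry; the integral representation of Theorem~\ref{jh97yb876vb} is not needed. Two of them will be immediate from double-angle formulae: substituting $\cos\frac{2\pi\nu l}{n}=1-2\sin^2\frac{\pi\nu l}{n}$ into the definition \eqref{984ycbn492v3} and splitting the sum gives $C_n(\nu)=\sum_{l=1}^{n-1}\csc\frac{\pi l}{n}-2\sum_{l=1}^{n-1}\sin^2\frac{\pi\nu l}{n}\csc\frac{\pi l}{n}$, where the first sum is $S_n$ by \eqref{984ycbn492}; this is the second formula, and $\cos\frac{2\pi\nu l}{n}=2\cos^2\frac{\pi\nu l}{n}-1$ yields the third in exactly the same way. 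The first formula follows by iterating \eqref{3984ycbf34y}: starting from $C_n(0)=S_n$ (cf.~\eqref{9348yb394y}) and applying it for the successive arguments $0,1,\dots,\nu-1$ telescopes to $C_n(\nu)=S_n-2\sum_{j=0}^{\nu-1}\ctg\frac{(2j+1)\pi}{2n}$, which after reindexing by $l=j+1$ is the claimed $S_n-2\sum_{l=1}^{\nu}\ctg\frac{(2l-1)\pi}{2n}$; equivalently, this is \eqref{iercrwev} with $\kappa=0$ together with $C_n(0)=S_n$, and for $\nu=0$ the right-hand sum is empty and the identity reduces to $C_n(0)=S_n$.

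The fourth formula is the only one requiring a short computation, though it remains routine. I would first expand the product of sines by the product-to-sum rule, $\sin\frac{\pi l\nu}{n}\sin\frac{\pi l(\nu+1)}{n}=\tfrac12\bigl(\cos\frac{\pi l}{n}-\cos\frac{(2\nu+1)\pi l}{n}\bigr)$, so that
\be\notag
2\sum_{l=1}^{n-1}\ctg\frac{\pi l}{n}\sin\frac{\pi l\nu}{n}\sin\frac{\pi l(\nu+1)}{n}=\sum_{l=1}^{n-1}\ctg\frac{\pi l}{n}\cos\frac{\pi l}{n}-\sum_{l=1}^{n-1}\ctg\frac{\pi l}{n}\cos\frac{(2\nu+1)\pi l}{n}\,.
\ee
In the first sum on the right I would use $\ctg\frac{\pi l}{n}\cos\frac{\pi l}{n}=\csc\frac{\pi l}{n}-\sin\frac{\pi l}{n}$, which makes it $S_n-\ctg\frac{\pi}{2n}$ by \eqref{984ycbn492} and the classical value $\sum_{l=1}^{n-1}\sin\frac{\pi l}{n}=\ctg\frac{\pi}{2n}$ (the case $r=1$ of \eqref{0imxd143i9m}). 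In the second sum I would apply the product-to-sum rule once more, $\cos\frac{\pi l}{n}\cos\frac{(2\nu+1)\pi l}{n}=\tfrac12\bigl(\cos\frac{2\pi\nu l}{n}+\cos\frac{2\pi(\nu+1)l}{n}\bigr)$, so that after dividing by $\sin\frac{\pi l}{n}$ and summing it becomes $\tfrac12 C_n(\nu)+\tfrac12 C_n(\nu+1)$ by the definition \eqref{984ycbn492v3}. Finally I would remove $C_n(\nu+1)$ through the recurrence \eqref{3984ycbf34y}, obtaining $\tfrac12 C_n(\nu)+\tfrac12 C_n(\nu+1)=C_n(\nu)-\ctg\frac{(2\nu+1)\pi}{2n}$; substituting everything back and solving for $C_n(\nu)$ reproduces the fourth identity exactly.

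The main obstacle, such as it is, is purely a matter of bookkeeping in the fourth formula: one must correctly match the cosine sums produced by the two product-to-sum steps against $C_n(\nu)$, $C_n(\nu+1)$, $S_n$ and $\sum_{l=1}^{n-1}\sin\frac{\pi l}{n}$, keep the signs straight, and call on \eqref{3984ycbf34y} at the right moment to eliminate $C_n(\nu+1)$. All cotangents that occur, at arguments $\frac{\pi}{2n}$ and $\frac{(2\nu+1)\pi}{2n}$, are well defined, since those arguments lie strictly between $0$ and $\pi$.
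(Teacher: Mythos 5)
Your proof is correct. For the first three identities you follow essentially the same path as the paper: the first is the telescoped recurrence \eqref{3984ycbf34y} started at $C_n(0)=S_n$ (which is exactly how the paper's property \eqref{jkhbwc8qfc032} is obtained and then specialized), and the second and third are the double--angle substitutions the paper also uses. Where you genuinely diverge is the fourth identity. The paper derives it \emph{from the first formula}: it expands each cotangent in $\sum_{l=1}^{\nu}\ctg\frac{(2l-1)\pi}{2n}$ as a sine sum via \eqref{0imxd143i9m}, interchanges the order of summation, evaluates the inner sums in closed form with \eqref{874db3478v1}--\eqref{874db3478v2} at $x=1$, and recombines ``after some algebra.'' You instead verify the identity directly by expanding its right--hand side: two product--to--sum steps turn $2\sum\ctg\frac{\pi l}{n}\sin\frac{\pi l\nu}{n}\sin\frac{\pi l(\nu+1)}{n}$ into $S_n-\ctg\frac{\pi}{2n}-\tfrac12 C_n(\nu)-\tfrac12 C_n(\nu+1)$, and the recurrence \eqref{3984ycbf34y} eliminates $C_n(\nu+1)$. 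I checked the bookkeeping: the angle differences and sums, the identity $\ctg\frac{\pi l}{n}\cos\frac{\pi l}{n}=\csc\frac{\pi l}{n}-\sin\frac{\pi l}{n}$, and the final sign pattern all come out right, including the boundary case $\nu=n-1$ where $C_n(n)=S_n$ under the periodic extension \eqref{9348yb394y}. Your route for the fourth formula is arguably more transparent — it avoids the double sum and the unstated ``some algebra'' — at the cost of obscuring the fact (which the paper's derivation makes visible) that the fourth identity is just a resummation of the cotangent sum appearing in the first.
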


\begin{proof}
The first formula is deduced from property \eqref{jkhbwc8qfc032}. 
Setting $\nu=0$, using \eqref{9348yb394y} and then writing $\nu$ instead of $\kappa$,
yields this formula. The second and third equalities follow from the elementary trigonometric
identity $\cos2\alpha=2\cos^2\alpha-1=1-2\sin^2\alpha$. Finally, the fourth equality is obtained from the first one in the following manner. 
From \eqref{0imxd143i9m} it follows that the cotangent sum may be written in the following form
\be\label{89cy5un23}
\begin{array}{ll}
\displaystyle
&\displaystyle 
\sum_{l=1}^{\nu} \ctg\frac{\,\left(2l-1\right)\pi\,}{2n} \, =\,\sum_{k=1}^{n-1}\sum_{l=1}^{\nu} 
\sin\!\left(\frac{2\pi k l}{n}-\frac{\pi k}{n}\right)=
\\[8mm]
&\displaystyle\qquad\qquad\qquad
= \sum_{k=1}^{n-1}\left\{\!
\left[-\frac{\cos\!\left(\frac{2\pi k \nu}{n}+\frac{\pi k}{n}\right)}{2\sin\frac{\,\pi k\,}{n}}+\frac12\ctg\frac{\,\pi k\,}{n}\right]
\!\times\cos\frac{\,\pi k\,}{n} \, - \right.\\[8mm]
&\displaystyle\qquad\qquad\qquad\qquad\qquad\qquad
\left. - 
\left[\frac{\sin\!\left(\frac{2\pi k \nu}{n}+\frac{\pi k}{n}\right)}{2\sin\frac{\,\pi k\,}{n}}-\frac12\right] \!\times
\sin\frac{\,\pi k\,}{n} \right\}.
\end{array}
\ee
by virtue of \eqref{874db3478v1} and \eqref{874db3478v2} evaluated at $x=1$ and $\varphi=2\pi k/n$.
Then, employing twice \eqref{0imxd143i9m}, we obtain
\be\notag
\sum_{k=1}^{n-1}\sin\frac{\,\pi k\,}{n}\,=\,\ctg\frac{\,\pi\,}{2n} \qquad \text{and}\qquad
\sum_{k=1}^{n-1}\sin\frac{\,\pi k \left(2\nu+1\right)\,}{n}\,=\,\ctg\frac{\,\left(2\nu+1\right)\pi\,}{2n} \,.
\ee
Inserting latter formulae into \eqref{89cy5un23} yields, after some algebra, the fourth formula of the Theorem.
\end{proof}

\label{iuo4ycn39v2}
Last three formulae obtained in the preceding theorem are, in some sense, similar. As to the first formula
$\,C_n(\nu)\,= \,S_n - 2\!\sum\limits_{l=1}^{\nu} \! \ctg\frac{\,(2l-1)\pi\,}{2n}\,$, it is clearly of different type and
merits to be briefly discussed. 
First of all, it immediately implies that $C_n(\nu)$ cannot be greater than $S_n$ and lesser than $-C_n$,
properties that we already gave in Section~\ref{h09387rxhxdws}, and which follows from the fact 
that $\sum\ctg\frac{\,(2l-1)\pi\,}{2n}$ increases while $\nu$ remains below $\frac12n$, decreases when $\nu>\frac12n$
(when $l>\frac12n$, the $l$th term sums with the $(n-l)$th term, which has the same magnitude and the opposite sign, leading
thus to the overall decrease of $\sum\ctg\frac{\,(2l-1)\pi\,}{2n}$), and vanishes as $\nu$ reaches $n$, see e.g.~Fig.~\ref{g6hytbhfw2}.
It also explains why as long as $n<\nu$, the sum $C_n(\nu)$ reaches local maxima as $n$ approaches common divisors with $\nu$ 
(see Fig.~\ref{g6hytbhfw}). This is due to the sum of cotangents, which not only vanishes at $n=\nu$, but as long as $n<\nu$ is also relatively small each time when $\gcd(n,\nu)>1$. 
Furthermore, as noted before, $S_n$ is very well investigated; the study of $C_n(\nu)$ may, therefore, be reduced 
to that of $\sum\ctg\frac{\,(2l-1)\pi\,}{2n}$. Note, however, that the investigation of the cotangent sums often faces 
considerable difficulties;\footnote{Despite the fact that they have been regularly studied at least since Euler's time, 
see e.g.~\cite[Sections 1.1 \& 4]{iaroslav_18}.} furthermore,
some of such sums even appear to be directly associated to the study of the Riemann
hypothesis \cite{beck_02,bettin_01,bettin_02,byrne_01,cvijovic_00,cvijovic_03,cvijovic_05,derevyanko_01,dieter_01,ejsmonta_01,folsom_01,goubi_01,lewis_01,
maier_01,rassias_0,sofo_01}.\\

We come to obtain now an infinite series representation for the function $C_n(\nu)$,
which is useful for the derivation of the asymptotic expansion of $C_n(\nu)$ at large $n$.
To derive it, we first need to prove the following Lemma.

\begin{lemma}\label{8743ctb23478}
If $\,\Re(\alpha+b - \beta)>0\,$, the following improper integral converges and may be evaluated 
via a combination of four digamma functions
\be\notag
\begin{array}{ll}
\displaystyle 
\int\limits_0^\infty \! e^{-\alpha x} \, \frac{\ch\beta x}{\ch bx}\,dx \,=\, \frac{1}{4b}&\displaystyle\!
\left\{\!\Psi\left(\frac34+\frac{\alpha+\beta}{4b}\right)-\Psi\left(\frac14+\frac{\alpha-\beta}{4b}\right) +\right.\\[7mm]
&\displaystyle \qquad \qquad \left.
+\,\Psi\left(\frac34+\frac{\alpha-\beta}{4b}\right)-\Psi\left(\frac14+\frac{\alpha+\beta}{4b}\right)\!\right\}.
 \end{array}
\ee
\end{lemma}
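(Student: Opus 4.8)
The plan is to reduce the integral to a known Mellin--Barnes type formula for the hyperbolic cosine ratio by splitting $\ch\beta x$ into exponentials. First I would write
$$
\int_0^\infty e^{-\alpha x}\,\frac{\ch\beta x}{\ch bx}\,dx \,=\, \frac12\int_0^\infty \frac{e^{-(\alpha-\beta)x}+e^{-(\alpha+\beta)x}}{\ch bx}\,dx\,,
$$
so that everything comes down to evaluating the single building block $I(s)\equiv\int_0^\infty e^{-sx}\operatorname{sech}(bx)\,dx$ for $\Re s>0$ (here $s$ will be $\alpha\mp\beta$, and the hypothesis $\Re(\alpha+b-\beta)>0$ guarantees $\Re(\alpha-\beta)>-b$, which together with $\ch bx\sim\tfrac12 e^{bx}$ is exactly what is needed for convergence at $+\infty$; convergence at $0$ is automatic).

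Next I would evaluate $I(s)$. The cleanest route is to expand $\operatorname{sech}(bx)=2e^{-bx}/(1+e^{-2bx})=2\sum_{k=0}^\infty(-1)^k e^{-(2k+1)bx}$ and integrate term by term, giving
$$
I(s)\,=\,2\sum_{k=0}^\infty \frac{(-1)^k}{s+(2k+1)b}\,=\,\frac{1}{b}\sum_{k=0}^\infty(-1)^k\!\left(\frac{1}{\,k+\tfrac12+\tfrac{s}{2b}\,}\right).
$$
Splitting the alternating series into even and odd $k$ and using the classical expansion $\Psi(z)=-\gamma+\sum_{m\ge0}\bigl(\tfrac1{m+1}-\tfrac1{m+z}\bigr)$, or equivalently the formula $\sum_{k\ge0}(-1)^k/(k+a)=\tfrac12[\Psi(\tfrac{a+1}2)-\Psi(\tfrac a2)]$, one obtains with $a=\tfrac12+\tfrac{s}{2b}$
$$
I(s)\,=\,\frac{1}{2b}\left\{\Psi\!\left(\frac34+\frac{s}{4b}\right)-\Psi\!\left(\frac14+\frac{s}{4b}\right)\right\}.
$$
I would justify the interchange of sum and integral by absolute convergence (dominated convergence, since the partial sums are dominated by $e^{-\Re(s)x}\operatorname{sech}(bx)$ up to a constant once $\Re(s+b)>0$), and I would cite the digamma-difference identity as standard.

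Finally I would substitute $s=\alpha-\beta$ and $s=\alpha+\beta$ into this expression and add, with the prefactor $\tfrac12$:
$$
\int_0^\infty e^{-\alpha x}\frac{\ch\beta x}{\ch bx}\,dx=\frac{1}{4b}\!\left\{\Psi\!\left(\tfrac34+\tfrac{\alpha-\beta}{4b}\right)-\Psi\!\left(\tfrac14+\tfrac{\alpha-\beta}{4b}\right)+\Psi\!\left(\tfrac34+\tfrac{\alpha+\beta}{4b}\right)-\Psi\!\left(\tfrac14+\tfrac{\alpha+\beta}{4b}\right)\right\},
$$
which, after reordering the four terms, is precisely the claimed identity. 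The only genuine subtlety — the ``hard part'' — is the convergence bookkeeping: one must check that the hypothesis $\Re(\alpha+b-\beta)>0$ (which is the binding constraint, being weaker on the $\alpha+\beta$ branch) is exactly what makes both the original integral and each term $I(\alpha\mp\beta)$ converge and legitimizes the termwise integration; everything else is routine manipulation of the digamma series. An alternative to the series expansion, if one prefers, is to quote the tabulated Laplace transform of $\operatorname{sech}$ directly (e.g.\ from Gradshteyn--Ryzhik), but the term-by-term argument is self-contained and I would present that.
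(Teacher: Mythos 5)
Your proof is correct, but it follows a genuinely different route from the paper's. The paper multiplies numerator and denominator by $2\sh bx$, so that
$\ch\beta x/\ch bx=\bigl(\sh[(b+\beta)x]+\sh[(b-\beta)x]\bigr)/\sh 2bx$, and then invokes twice the tabulated Laplace transform
$\int_0^\infty e^{-\alpha x}\,\sh\mu x/\sh mx\,dx=\tfrac{1}{2m}\{\Psi(\tfrac12+\tfrac{\alpha+\mu}{2m})-\Psi(\tfrac12+\tfrac{\alpha-\mu}{2m})\}$
(Gradshteyn--Ryzhik \no 3.541-2) with $m=2b$, $\mu=b\pm\beta$; the four digamma arguments $\tfrac34+\tfrac{\alpha\pm\beta}{4b}$ and $\tfrac14+\tfrac{\alpha\pm\beta}{4b}$ then drop out immediately. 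You instead split the \emph{numerator} $\ch\beta x$ into exponentials, reduce everything to $I(s)=\int_0^\infty e^{-sx}\sech(bx)\,dx$, and evaluate that building block from scratch via the geometric expansion of $\sech$ and the alternating digamma series $\sum_{k\ge0}(-1)^k/(k+a)=\tfrac12[\Psi(\tfrac{a+1}{2})-\Psi(\tfrac{a}{2})]$; your resulting $I(s)=\tfrac{1}{2b}\{\Psi(\tfrac34+\tfrac{s}{4b})-\Psi(\tfrac14+\tfrac{s}{4b})\}$ at $s=\alpha\mp\beta$ reproduces the stated identity exactly. What your version buys is self-containedness (the tabulated formula is effectively rederived, and the domination of the partial sums by $2e^{-(\Re s+b)x}$ makes the termwise integration rigorous); what the paper's version buys is brevity, at the cost of citing an external table. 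Your handling of the convergence condition is also sound: $\Re(\alpha+b-\beta)>0$ is indeed the binding constraint (taking $\Re\beta\ge0$ without loss of generality, since $\ch$ is even in $\beta$), and it is precisely what makes both branches $I(\alpha\mp\beta)$ and the original integral converge.
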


\begin{proof}
Consider the following integral
\be\notag
\int\limits_0^\infty \! e^{-\alpha x} \, \frac{\ch\beta x}{\ch bx}\,dx \,.
\ee
Multiplying the numerator and the denominator of the integrand by $2\sh bx$ yields
\be\notag
\begin{array}{ll}
\displaystyle 
\int\limits_0^\infty \! e^{-\alpha x} \, \frac{\ch\beta x}{\ch bx}\,dx &\displaystyle \,=\,
2\!\int\limits_0^\infty \! e^{-\alpha x} \, \frac{\, \sh bx  \ch\beta x\,}{\,2\ch bx  \sh bx\,}\,dx \\[7mm]
&\displaystyle =\,2\!\int\limits_0^\infty \! e^{-\alpha x} \, \frac{\,\sh[x(b+\beta)] + \sh[x(b-\beta)]\,}{\,\sh 2bx\,}\,dx \,.
\end{array}
\ee
Using twice the well--known formula
\be\notag
\int\limits_0^\infty \! e^{-\alpha x} \, \frac{\sh\mu x}{\sh mx}\,dx 
\,=\, \frac{1}{2m}\left\{\Psi\!\left(\frac12+\frac{\alpha+\mu}{2m}\right)-
\Psi\!\left(\frac12+\frac{\alpha-\mu}{2m}\right)\!\right\}\,, 
\ee
which holds for $\Re(\alpha+m - \mu)>0$, 
see e.g.~\cite[\no 3.541-2]{gradstein_en}, \cite[Vol.~I, Sec.~1.7.2, Eqs.~(14)--(15)]{bateman_01},
with $m=2b$ and $\mu=b\pm\beta$ respectively,
we immediately arrive at the announced result.
\end{proof}

\begin{theorem}[Infinite series representations for $\bm{C_n(\nu)}$]\label{iuoyh9}
The finite sum $C_n(\nu)$, defined by \eqref{984ycbn492v3}, admits the following series representation
\begin{eqnarray}
&&\displaystyle 
\!\!\!
\sum_{l=1}^{n-1} \cos\frac{2\pi \nu l}{n}\cdot\csc\frac{\,\pi l\,}{n} =  \frac{\,2n\,}{\,\pi\,}\ln\!\left(2\sin\frac{\pi\nu}{n}\right)
- \frac{2}{\pi}\left\{\Psi\!\left(\frac{2}{n}\right) - \Psi\!\left(\frac{1}{n}\right) \!\right\}
-\frac{2}{\pi}\csc\frac{2\pi\nu}{n} \times \notag\\[2mm]
&&\displaystyle \qquad
\times\sum_{l=2}^{\infty}\left\{\!\Psi\!\left(\frac{l-1}{2n}\right) - \Psi\!\left(\frac{l+1}{2n}\right) 
-\Psi\!\left(\frac{l-1}{n}\right) + \Psi\!\left(\frac{l+1}{n}\right)  \!\right\}\sin\frac{2\pi \nu l}{n} \,.\label{iuoyh9formula}
\end{eqnarray}
where the last infinite series converges at the same rate as $\,\sum l^{-2}\sin(2\pi \nu l/n) \,$.
\end{theorem}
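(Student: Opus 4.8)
The plan is to feed the second integral representation of Theorem~\ref{jh97yb876vb} through a Dirichlet--series expansion of its kernel, to evaluate the resulting moment integrals by means of Lemma~\ref{8743ctb23478}, and then to collect the terms. Write $\theta\equiv2\pi\nu/n$ and start from
\be\notag
C_n(\nu)=(n-2\nu)\ctg\theta-\frac{n}{\pi}\int\limits_0^\infty\frac{\ch\left[\left(\frac{n}{2}-1\right)x\right]}{\ch\frac{nx}{2}\,\left(\ch x-\cos\theta\right)}\,dx\,,
\ee
where throughout I assume $2\nu\not\equiv0\pmod n$, so that $\sin\theta\neq0$; the value $\nu=\frac12 n$ is then recovered by a limiting argument. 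Summing the geometric progression and separating the imaginary part gives, for every $x>0$, the expansion $\left(\ch x-\cos\theta\right)^{-1}=\frac{2}{\sin\theta}\sum_{l=1}^{\infty}e^{-lx}\sin l\theta$. Substituting this and interchanging summation with integration reduces the problem to
\be\notag
C_n(\nu)=(n-2\nu)\ctg\theta-\frac{2n}{\pi\sin\theta}\sum_{l=1}^{\infty}\sin l\theta\cdot I_l\,,\qquad I_l\equiv\int\limits_0^\infty e^{-lx}\,\frac{\ch\left[\left(\frac{n}{2}-1\right)x\right]}{\ch\frac{nx}{2}}\,dx\,.
\ee

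The interchange of $\sum$ and $\int$ is the one genuinely delicate point, since the Dirichlet series is not uniformly absolutely convergent near $x=0$. I would justify it by dominated convergence applied to the partial sums: one has $\sum_{l=1}^{N}e^{-lx}\sin l\theta=\Im\left[e^{-x+i\theta}\left(1-e^{N(-x+i\theta)}\right)\big/\left(1-e^{-x+i\theta}\right)\right]$, whose modulus is at most $2e^{-x}\big/\left|1-e^{-x+i\theta}\right|$; and since $\left|1-e^{-x+i\theta}\right|^2=2e^{-x}\left(\ch x-\cos\theta\right)\geqslant2e^{-x}\left(1-\cos\theta\right)$, this is bounded by $c(\theta)\,e^{-x/2}$ with $c(\theta)$ depending only on $\theta$, so that multiplying by the bounded, exponentially decaying factor $\ch[(\frac n2-1)x]/\ch\frac{nx}{2}$ yields a function integrable over $(0,\infty)$ uniformly in $N$.

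Next each $I_l$ is evaluated by Lemma~\ref{8743ctb23478} with $b=\frac n2$, $\beta=\frac n2-1$, $\alpha=l$ (the hypothesis $\Re(\alpha+b-\beta)=l+1>0$ always holds); after simplifying the four digamma arguments this gives $I_l=\frac{1}{2n}\left\{\Psi\!\left(1+\frac{l-1}{2n}\right)-\Psi\!\left(\frac{l+1}{2n}\right)+\Psi\!\left(\frac12+\frac{l+1}{2n}\right)-\Psi\!\left(\frac12+\frac{l-1}{2n}\right)\right\}$. The recurrence $\Psi(1+z)=\Psi(z)+\frac1z$ and the Legendre duplication formula $\Psi\!\left(z+\frac12\right)=2\Psi(2z)-\Psi(z)-2\ln2$ then turn this, for $l\geqslant2$, into $nI_l=\Psi\!\left(\frac{l-1}{2n}\right)-\Psi\!\left(\frac{l+1}{2n}\right)-\Psi\!\left(\frac{l-1}{n}\right)+\Psi\!\left(\frac{l+1}{n}\right)+\frac{n}{l-1}$, whereas for $l=1$, where the first digamma becomes $\Psi(1)$, the same identities together with $\Psi(1)-\Psi\!\left(\frac12\right)=2\ln2$ give $nI_1=\Psi\!\left(\frac2n\right)-\Psi\!\left(\frac1n\right)$. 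Substituting these back and peeling off the elementary tail $\sum_{l=2}^{\infty}\frac{\sin l\theta}{l-1}=\cos\theta\sum_{m=1}^{\infty}\frac{\sin m\theta}{m}+\sin\theta\sum_{m=1}^{\infty}\frac{\cos m\theta}{m}=\frac{(\pi-\theta)\cos\theta}{2}-\sin\theta\ln\!\left(2\sin\frac\theta2\right)$ (the classical Fourier series, valid for $0<\theta<2\pi$), one checks that the term $\frac{(\pi-\theta)\cos\theta}{2}$ cancels $(n-2\nu)\ctg\theta$ exactly, because $n-2\nu=\frac{n(\pi-\theta)}{\pi}$; that the logarithmic term produces $\frac{2n}{\pi}\ln\!\left(2\sin\frac{\pi\nu}{n}\right)$ since $\frac\theta2=\frac{\pi\nu}{n}$; that the $l=1$ contribution gives $-\frac2\pi\left\{\Psi\!\left(\frac2n\right)-\Psi\!\left(\frac1n\right)\right\}$; and that the remaining digamma parts assemble into precisely the infinite series of \eqref{iuoyh9formula}.

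Finally, for the convergence--rate assertion, I would use that $\frac{l-1}{2n}$ and $\frac{l+1}{2n}$ have exact midpoint $\frac{l}{2n}$, and that $\frac{l-1}{n}$ and $\frac{l+1}{n}$ have midpoint $\frac ln$. Expanding each of the two digamma differences about its midpoint and using $\Psi'(z)=\frac1z+\frac1{2z^2}+O(z^{-3})$ shows that their $\frac2l$ leading terms cancel, so that $\Psi\!\left(\frac{l-1}{2n}\right)-\Psi\!\left(\frac{l+1}{2n}\right)-\Psi\!\left(\frac{l-1}{n}\right)+\Psi\!\left(\frac{l+1}{n}\right)=-\frac{n}{l^2}+O(l^{-3})$; hence the last series converges at the rate of $\sum l^{-2}\sin(2\pi\nu l/n)$. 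I expect the dominated--convergence justification in the second paragraph and the bookkeeping of the digamma arguments in the third to be the places requiring the most care; the rest is routine manipulation.
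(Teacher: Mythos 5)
Your proposal is correct and follows essentially the same route as the paper: expand the discrete Poisson kernel $\left(\ch x-\cos\theta\right)^{-1}$ in the second integral representation of Theorem \ref{jh97yb876vb}, evaluate the resulting integrals by Lemma \ref{8743ctb23478}, simplify with the recurrence and duplication formulae for $\Psi$, and absorb the $\sum_{l\geqslant2}\sin l\theta/(l-1)$ tail via the classical Fourier series so that the $(n-2\nu)\ctg\theta$ term cancels. Your explicit dominated-convergence justification of the term-by-term integration is a welcome addition to a step the paper passes over, and your midpoint expansion for the convergence rate is equivalent to the paper's use of the Stirling series.
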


\begin{proof}
Consider the second formula of Theorem~\ref{jh97yb876vb} and let denote $\,\theta\equiv2\pi \nu/n\,$ for the purpose of brevity.
By expanding $\,(\ch x - \cos\theta)^{-1}$ into the uniformly convergent series
\be\notag
\frac{1}{\,\ch x - \cos\theta\,}  \,=\,\frac{2}{\,\sin\theta\,}\!
\sum_{l=1}^{\infty} e^{-x l}\sin\theta l\,, \qquad\, \Re x>0\,,
\ee
and by using Lemma~\ref{8743ctb23478}, we see at once that
\be\label{0834ucn30}
\begin{array}{ll}
\displaystyle 
&\displaystyle C_n(\nu) \,= \, \big(n-2 \nu\big)\ctg\theta
\,- \,\frac{\,2n\,}{\,\pi\sin\theta\,} 
 \sum_{l=1}^{\infty} \sin\theta l \! \int\limits_0^\infty \! e^{-x l} \,
\frac{\,\ch\big[x\big(\frac12n-1\big)\big]\,}{\,\ch\frac12xn \,} \,dx\,=\\[8mm]
&\displaystyle\quad
= \, \big(n-2 \nu\big)\ctg\theta
\,- \,\frac{\,1\,}{\,\pi\sin\theta\,} 
 \sum_{l=1}^{\infty} \sin\theta l \left\{\Psi\!\left(1+\frac{l-1}{2n}\right) - \Psi\!\left(\frac{l+1}{2n}\right) 
 +  \right.\\[8mm]
 &\displaystyle\quad\;\;\left.
 + \,\Psi\!\left(\frac12+\frac{l+1}{2n}\right)  - \, \Psi\!\left(\frac12+\frac{l-1}{2n}\right)
 \!\right\} =\, \big(n-2 \nu\big)\ctg\theta \, - \,\frac{\,1\,}{\,\pi\,} \times \\[8mm]
\displaystyle 
&\displaystyle\qquad
 \times\left\{2\ln2+ \Psi\!\left(\frac12+\frac{1}{n}\right)  - \Psi\!\left(\frac{1}{n}\right)\!\right\}
 - \,\frac{\,1\,}{\,\pi\sin\theta\,} \sum_{l=2}^{\infty}\sin\theta l\left\{\frac{2n}{l-1}\,  +\right.\\[8mm]
 &\displaystyle\qquad \left.
 +\Psi\!\left(\frac{l-1}{2n}\right) - \Psi\!\left(\frac{l+1}{2n}\right) 
 + \Psi\!\left(\frac12+\frac{l+1}{2n}\right)  - \Psi\!\left(\frac12+\frac{l-1}{2n}\right)  \!\right\}.
\end{array}
\ee
Using two well--known Fourier series in order to evaluate
\be\label{675v67v}
\begin{array}{ll}
\displaystyle
\sum_{l=2}^{\infty}\frac{\sin\theta l}{\,l-1\,}&\displaystyle \,=\,\cos\theta \sum_{l=1}^{\infty} \frac{\,\sin\theta l\,}{l} \,+
\,\sin\theta \sum_{l=1}^{\infty} \frac{\,\cos\theta l\,}{l}\\[8mm]
&\displaystyle \,=\,\frac{\,\pi-\theta\,}{2}\cos\theta \,
- \,\sin\theta\cdot\ln\!\left(2\sin\frac\theta2\right)\,,
\end{array}
\ee
and employing thrice the duplication formula for the digamma function
\be\notag
\begin{array}{l}
\displaystyle 
\Psi\!\left(\frac12+\frac{1}{n}\right)  - \Psi\!\left(\frac{1}{n}\right)=
2\Psi\!\left(\frac{2}{n}\right)  - 2\Psi\!\left(\frac{1}{n}\right) - 2\ln2 \\[8mm]
\displaystyle 
 \Psi\!\left(\frac{l-1}{2n}\right) -\Psi\!\left(\frac12+\frac{l-1}{2n}\right) =
2\Psi\!\left(\frac{l-1}{2n}\right) - 2\Psi\!\left(\frac{l-1}{n}\right) +2\ln2\\[8mm]
\displaystyle 
\Psi\!\left(\frac12+\frac{l+1}{2n}\right)  - \Psi\!\left(\frac{l+1}{2n}\right) =
-2\Psi\!\left(\frac{l+1}{2n}\right) + 2\Psi\!\left(\frac{l+1}{n}\right) -2\ln2\,,
\end{array}
\ee
the last expression in \eqref{0834ucn30} immediately reduces to \eqref{iuoyh9formula}.

Finally, in order to study the behaviour of the general term of series \eqref{iuoyh9formula} at large index $l$, we use 
the Stirling formula 
\be\label{lk2093mffmnjw}
\Psi(x) \, =\,\ln x \,-\, \frac{1}{\,2x\,} \,-\, \frac{1}{\,2\,}\!\sum_{r=1}^{N-1} \frac{B_{2r}}{\,r\,x^{2r}\,}
\,-\, \frac{\lambda\,B_{2N}}{\,2Nx^{2N}\,}\,, \qquad x>0\,, 
\ee
where as usually $0<\lambda<1\,$ and $N=2,3,4,\ldots$, $N<\infty$.
For a sufficiently large $l$, we, therefore, have:
\be\notag
\Psi\!\left(\frac{l-1}{2n}\right) - \Psi\!\left(\frac{l+1}{2n}\right) 
-\Psi\!\left(\frac{l-1}{n}\right) + \Psi\!\left(\frac{l+1}{n}\right)
\,\sim\,-\frac{n}{\,l^2\,}
\ee
Thus, series \eqref{iuoyh9formula} converges slightly better than Euler's series $\,\sum l^{-2}$.
\end{proof}

\section{Preliminary asymptotic studies for large $n$}\label{4c312ct3q4rgfwa}
In this part we study the asymptotic behaviour of $C_n(\nu)$ at large $n.$ The results 
are based, mostly, on the series representations obtained earlier in Theorem~\ref{iuoyh9} 

\begin{theorem}[Almost asymptotic expansion of $\bm{C_n(\nu)}$]\label{ejbhw08}
For any $n=2,3,4,\ldots\,$ and $N=2,3,4,\ldots\,$, $N<\infty\,$, the sum $C_n(\nu)$
admits the following expansion  
\begin{eqnarray}
\displaystyle 
\sum_{l=1}^{n-1} \! &\displaystyle\cos\frac{2\pi \nu l}{n}\cdot\csc\frac{\,\pi l\,}{n}  \,=\,  -\frac{\,2n\,}{\,\pi\,}
\ln\!\left(2\sin\frac{\pi\nu}{n}\right) +\, 2\!\sum_{r=1}^{N-1} \frac{\,\big(1-2^{1-2r}\big)\pi^{2r-1} B_{2r}\,}{\,(2r)!\, n^{2r-1}\,} 
\times \notag\\[3.5mm]
&\displaystyle \times \mathscr{H}_{2r-1}(\nu/n) \,
+\, 2\lambda\frac{\,\big(1-2^{1-2N}\big)\pi^{2N-1} B_{2N}\,}{\, (2N)!\, n^{2N-1}\,} 
\mathscr{H}_{2N-1}(\nu/n) \,,\label{8943ycn9438}
\end{eqnarray}
$0<\lambda<1$, where we denoted for the sake of brevity 
\be\label{93402s}
\begin{array}{lll}
\displaystyle
\mathscr{H}_{2r-1}(\nu/n) \,\equiv\,\left.\frac{d^{2r-1}}{d\varphi^{2r-1}}\, \ctg\varphi \right|_{\varphi=\pi\nu/n} &\displaystyle =
-\frac{1}{\,\pi^{2r}\,}\left\{\Psi_{2r-1}\left(\frac{\nu}{n}\right) + \Psi_{2r-1}\left(1-\frac{\nu}{n}\right) \!\right\}\\[8mm]
&\displaystyle = (-1)^r \frac{(2n)^{2r-1}}{r}\sum_{s=1}^n B_{2r}\!\left(\frac{s}{n}\right) \cos\frac{\,2\pi s \, \nu\,}{n}
\end{array}
\ee
where $B_{r}(x)$ are the Bernoulli polynomials and where all $\mathscr{H}_{2r-1}(\nu/n)\,$ are negative.
\end{theorem}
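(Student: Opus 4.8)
The plan is to start from the first integral representation in Theorem~\ref{jh97yb876vb} — the one that also underlies Theorem~\ref{iuoyh9} — and to isolate from it a single exponentially concentrated integral. Writing $\theta\equiv 2\pi\nu/n$ and splitting the numerator,
\[
\frac{(1+x^n)\cos\theta-x^{n-1}-x}{1+x^n}=(\cos\theta-x)-\frac{x^{n-1}(1-x^2)}{1+x^n}\,,
\]
one sees that the first piece contributes $\frac{2n}{\pi}\int_0^1(\cos\theta-x)(x^2-2x\cos\theta+1)^{-1}dx=-\frac{2n}{\pi}\ln(2\sin\tfrac{\pi\nu}{n})$, because its integrand is $-\tfrac12\tfrac{d}{dx}\ln(x^2-2x\cos\theta+1)$. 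This gives the \emph{exact} identity
\[
C_n(\nu)=-\frac{2n}{\pi}\ln\!\left(2\sin\frac{\pi\nu}{n}\right)-\frac{2n}{\pi}\int_0^1\frac{x^{n-1}(1-x^2)\,dx}{(1+x^n)\,(x^2-2x\cos\theta+1)}\,.
\]
The substitution $x=e^{-t/n}$ collapses the remaining integral: $x^{n-1}=e^{-t}e^{t/n}$ absorbs the Jacobian $e^{-t/n}$, and after multiplying numerator and denominator by $e^{t/n}$ the rational factor becomes $\sh(t/n)/(\ch(t/n)-\cos\theta)$, so that
\[
C_n(\nu)=-\frac{2n}{\pi}\ln\!\left(2\sin\frac{\pi\nu}{n}\right)-\frac{2}{\pi}\int_0^\infty\frac{F(t/n)}{e^t+1}\,dt\,,\qquad F(\epsilon)\equiv\frac{\sh\epsilon}{\ch\epsilon-\cos\theta}\,.
\]
(The same identity can be reached from the series of Theorem~\ref{iuoyh9} by rewriting each digamma difference there as $-n\int_0^1 u^{l-2}(1-u^2)(1+u^n)^{-1}du$ and summing the geometric series, which is how the argument ties back to that theorem.)

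Since $\theta\in(0,2\pi)$ one has $\ch\epsilon-\cos\theta\ge 1-\cos\theta>0$ for every real $\epsilon$, so $F$ is $C^\infty$ on $\mathbbm{R}$; being odd, its Taylor polynomial at $0$ contains only odd powers, and Taylor's theorem in Lagrange form gives $F(\epsilon)=\sum_{r=1}^{N-1}c_{2r-1}(\theta)\,\epsilon^{2r-1}+\frac{F^{(2N-1)}(\xi)}{(2N-1)!}\,\epsilon^{2N-1}$ for some $\xi$ between $0$ and $\epsilon$. Putting $\epsilon=t/n$, integrating against $(e^t+1)^{-1}$, and using $\int_0^\infty t^{2r-1}(e^t+1)^{-1}dt=(2r-1)!\,(1-2^{1-2r})\zeta(2r)$ produces $-\frac{2}{\pi}\sum_{r=1}^{N-1}c_{2r-1}(2r-1)!\,(1-2^{1-2r})\zeta(2r)\,n^{1-2r}$ together with a remainder of exactly the same shape with $r=N$. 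Because (see below) the $r$-th term carries the sign $(-1)^{r}$, this expansion is enveloping, so the remainder equals $2\lambda$ times the stated $N$-th term with $0<\lambda<1$; the contribution of the range $t\gtrsim n$, where $t/n$ leaves the disc of analyticity of $F$, is exponentially small and harmless.

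The coefficients are then identified via the Mittag--Leffler (Poisson-kernel) expansion
\[
\frac{\sh\epsilon}{\ch\epsilon-\cos\theta}=2\sum_{k\in\mathbbm{Z}}\frac{\epsilon}{\epsilon^2+(\theta+2\pi k)^2}\,,
\]
which, expanded in $\epsilon$, gives $c_{2r-1}(\theta)=2(-1)^{r-1}\sum_{k\in\mathbbm{Z}}(\theta+2\pi k)^{-2r}$, combined with the classical identity $\pi\ctg\pi x=\sum_{k\in\mathbbm{Z}}(x+k)^{-1}$ differentiated $2r-1$ times, $\sum_{k\in\mathbbm{Z}}(x+k)^{-2r}=-\frac{\pi^{2r}}{(2r-1)!}\,\ctg^{(2r-1)}(\pi x)$. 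Evaluating at $x=\nu/n$ yields $c_{2r-1}(\theta)=\frac{(-1)^{r}}{2^{2r-1}(2r-1)!}\,\mathscr{H}_{2r-1}(\nu/n)$, and inserting this into the finite sum above while using $\zeta(2r)=(-1)^{r+1}(2\pi)^{2r}B_{2r}/\bigl(2(2r)!\bigr)$ makes the powers of $2$ and $\pi$ collapse into precisely \eqref{8943ycn9438}. The two alternative forms of $\mathscr{H}_{2r-1}(\nu/n)$ in \eqref{93402s} are obtained separately: the polygamma form by differentiating the reflection formula $\pi\ctg\pi x=\Psi(1-x)-\Psi(x)$, and the Bernoulli-polynomial form from the Fourier/multiplication expansion of $B_{2r}$. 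Negativity of $\mathscr{H}_{2r-1}(\nu/n)$ is then immediate: the odd-order polygamma $\Psi_{2r-1}$ is positive on $(0,\infty)$, whence $\mathscr{H}_{2r-1}(\nu/n)=-\pi^{-2r}[\Psi_{2r-1}(\nu/n)+\Psi_{2r-1}(1-\nu/n)]<0$; equivalently $\mathscr{H}_{2r-1}(\nu/n)=-2^{2r}(2r-1)!\sum_{k\in\mathbbm{Z}}(\theta+2\pi k)^{-2r}<0$, which together with the sign alternation of $B_{2r}$ shows that the $r$-th term of the expansion has sign $(-1)^{r}$.

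The main obstacle is the rigorous treatment of the remainder: justifying term-by-term integration of the finite Taylor expansion of $F(t/n)$ against $(e^t+1)^{-1}$ over the whole half-line, and proving that the Lagrange remainder is genuinely $\lambda$ times the first omitted term with $0<\lambda<1$ \emph{uniformly in $\nu$} — that is, establishing the enveloping property of the resulting Bernoulli-type series. Everything else is routine bookkeeping: one exact integral identity, one substitution, a Taylor expansion, and two classical partial-fraction / reflection identities.
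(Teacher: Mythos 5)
Your argument is correct in every step I can verify, and it takes a genuinely different route from the paper. The paper derives \eqref{8943ycn9438} from the infinite series of Theorem~\ref{iuoyh9}: it substitutes the enveloping Taylor expansion \eqref{8976bt68} of $\Psi$ into that series, resums the resulting divergent trigonometric series by regular (Ces\`aro/Abel) means, and assembles the coefficients through \eqref{khgbitbi}--\eqref{98034ycn} before invoking Euler's formula \eqref{894yrnbcssw}. You instead go back to the first integral representation of Theorem~\ref{jh97yb876vb}, peel off the logarithmic leading term exactly (your splitting of the numerator and the antiderivative $-\tfrac12\ln(x^2-2x\cos\theta+1)$ both check out), and reduce everything to the single Fermi--Dirac--type integral $\int_0^\infty F(t/n)(e^t+1)^{-1}\,dt$ with $F(\epsilon)=\sh\epsilon/(\ch\epsilon-\cos\theta)$; the expansion is then a Watson's-lemma computation, with coefficients identified via the partial-fraction (Poisson-kernel) expansion of $F$ and the reflection/Hurwitz identities. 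I have checked that the constants do collapse to $2\big(1-2^{1-2r}\big)\pi^{2r-1}B_{2r}\,\mathscr{H}_{2r-1}(\nu/n)\big/\big((2r)!\,n^{2r-1}\big)$, and your parenthetical remark rewriting the digamma differences as $-n\int_0^1 u^{l-2}(1-u^2)(1+u^n)^{-1}du$ correctly shows the two starting points are equivalent. What your route buys is the complete avoidance of divergent series and of the intermediate bookkeeping; what the paper's route buys is the reuse of Theorem~\ref{iuoyh9} as an already-established intermediate result.

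The one step you flag as an obstacle --- that the remainder is $\lambda$ times the first omitted term, $0<\lambda<1$, uniformly in $\nu$ --- closes cleanly inside your own framework, and more easily than via the Lagrange form you invoke (whose derivative bound would indeed be awkward). Do not Taylor-expand $F$ at all: apply the exact algebraic identity $(1+u)^{-1}=\sum_{r=0}^{N-2}(-u)^r+(-u)^{N-1}(1+u)^{-1}$ with $u=\epsilon^2/a_k^2$ to each summand of your expansion, giving
\be\notag
\frac{\epsilon}{\,\epsilon^2+a_k^2\,}\,=\,\sum_{r=1}^{N-1}(-1)^{r-1}\frac{\epsilon^{2r-1}}{a_k^{2r}}\;+\;(-1)^{N-1}\frac{\epsilon^{2N-1}}{a_k^{2N}}\cdot\frac{1}{\,1+\epsilon^2/a_k^2\,}\,,\qquad a_k=\theta+2\pi k\,.
\ee
For real $\epsilon$ the last factor lies in $(0,1]$, so each $k$-th remainder has the sign of, and is dominated by, the first omitted term; summing over $k$ (all terms of one sign) and integrating against the positive weight $(e^t+1)^{-1}$ preserves this, so the total remainder is $\lambda$ times the full $r=N$ term with $0<\lambda<1$, uniformly in $\nu$ and $n$, and no separate treatment of the range $t\gtrsim n$ is needed. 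This also makes your sign claim rigorous. The paper's own proof is no more detailed on this point, so once you patch the remainder this way your version is, if anything, the tighter of the two.
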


Remark that the above theorem provides the expansion of $C_n(\nu)$ via the derivatives of the cotangent;
in fact, not only the tail contains such derivaties, but also the dominant term, which is is actually, up to some coeffcients, 
the antiderivative of the cotangent (the only term which does not contain the derivatives of the cotangent is $-\frac{2n\ln2}{\pi}$).
The above formula leads to several important corollaries and has also several useful applications. 
In particular, under some conditions, on may readily deduce from it several asymptotic formulae
for $C_n(\nu)$ and $C_n$ at large $n$.

\begin{corollary}[Asymptotic representation of $\bm{C_n(\nu)}$ at large $\bm{n}$]\label{ejbhw08v2}
At large $n$ the following asymptotic representation holds for the sum $C_n(\nu)$
\be\notag
\sum_{l=1}^{n-1} \cos\frac{2\pi \nu l}{n}\cdot\csc\frac{\,\pi l\,}{n}\,\sim\, 
-\frac{\,2n\,}{\,\pi\,}\ln\!\left(2\sin\frac{\pi\nu}{n}\right)\,, 
\ee
where $\nu\neq\frac16n$ and $\nu\neq\frac56n$.
\end{corollary}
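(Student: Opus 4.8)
The statement is really a corollary of Theorem~\ref{ejbhw08}, so the plan is simply to extract the leading term of the asymptotical expansion \eqref{8943ycn9438} and to bound everything else against it. First I would apply \eqref{8943ycn9438} with the smallest admissible order, $N=2$. Recalling that $B_2=\tfrac16$, this yields
\be\notag
C_n(\nu)\,=\,-\frac{\,2n\,}{\,\pi\,}\ln\!\left(2\sin\frac{\pi\nu}{n}\right)+\frac{\pi}{\,12n\,}\,\mathscr{H}_{1}(\nu/n)\,+\,2\lambda\,\frac{\,\big(1-2^{-3}\big)\pi^{3}B_{4}\,}{4!\,n^{3}}\,\mathscr{H}_{3}(\nu/n)\,,\qquad 0<\lambda<1\,,
\ee
so that, apart from the leading logarithmic term, only two explicit correction terms remain.

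Next I would evaluate the two coefficients from their definition \eqref{93402s}: $\mathscr{H}_{1}(\nu/n)=\frac{d}{d\varphi}\ctg\varphi\big|_{\varphi=\pi\nu/n}=-\csc^{2}(\pi\nu/n)$, which is negative, in agreement with the last assertion of Theorem~\ref{ejbhw08}; similarly $\mathscr{H}_{3}(\nu/n)=\frac{d^{3}}{d\varphi^{3}}\ctg\varphi\big|_{\varphi=\pi\nu/n}$, which is bounded whenever $\pi\nu/n$ lies in a compact subinterval of $(0,\pi)$. Hence both correction terms are $O(n^{-1})$ uniformly as long as $\nu/n$ stays in a fixed compact subset of $(0,1)$, and even when $\nu/n\to 0$ or $\nu/n\to 1$ they grow only like $n^{-1}\csc^{2}(\pi\nu/n)$, whereas the leading term then grows like $n\ln n$.

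Finally I would divide the displayed identity by its first term and check that the ratio of the remainder to the leading term tends to $0$. This is immediate provided $\ln(2\sin(\pi\nu/n))$ does not tend to $0$, i.e. provided $\sin(\pi\nu/n)$ stays away from $\tfrac12$: the configurations $\nu=\tfrac16 n$ and $\nu=\tfrac56 n$ are exactly those for which $2\sin(\pi\nu/n)=1$ and the leading term vanishes identically, which is why they must be excluded (writing $C_n(\nu)\sim B_n$ with $B_n\equiv 0$ being meaningless). The only point requiring a little care — and it is the main, albeit mild, obstacle — is this competition, near the excluded values and near the endpoints $\nu/n\in\{0,1\}$, between $n\ln(2\sin(\pi\nu/n))$ and $n^{-1}\csc^{2}(\pi\nu/n)$; away from $\nu/n=\tfrac16,\tfrac56$ the former always dominates, which is precisely the content of the corollary. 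Everything else is a direct read-off from Theorem~\ref{ejbhw08}.
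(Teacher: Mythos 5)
Your proposal is correct and follows the same route as the paper: retain the dominant term of the expansion in Theorem~\ref{ejbhw08} (taking $N=2$, with $\mathscr{H}_1(\nu/n)=-\csc^2(\pi\nu/n)$), check that the corrections are negligible relative to the logarithmic term, and exclude $\nu=\frac16 n$, $\nu=\frac56 n$ precisely because there $2\sin(\pi\nu/n)=1$ and the leading term vanishes. Your discussion of the competition between $n\ln(2\sin(\pi\nu/n))$ and $n^{-1}\csc^2(\pi\nu/n)$ near the endpoints is a welcome extra detail that the paper leaves implicit.
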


\begin{corollary}[Complete asymptotics of $\bm{C_n}$ at large $\bm{n}$]\label{9784ryb}
Let $N=2,3,4,\ldots\,$, $N<\infty\,$. Then, for $n=2,4,6,\ldots\,$ the sum $C_n$, admits the following expansion
\be\notag
\begin{array}{ll}
\displaystyle
\sum_{l=1}^{n-1} (-1)^{l+1} \csc\frac{\,\pi l\,}{n}\, = \,\frac{\,2n\ln2\,}{\,\pi\,}\:
+  & \displaystyle 2\sum_{r=1}^{N-1} \frac{\,(-1)^{r+1}\big(2^{2r-1}-1\big)
\big(2^{2r}-1\big)\pi^{2r-1} B^2_{2r}\,}{\,r\, (2r)!\, n^{2r-1}\,}  \, +  \, \\[8mm]
\displaystyle\: & \displaystyle
 \, +  \, O\big(n^{1-2N}\big)\,.
\end{array}
\ee
which at $n\to\infty$ becomes its complete (or full) asymptotics.
Writing down first few terms, we have
\be\notag
\sum_{l=1}^{n-1} (-1)^{l+1} \csc\frac{\,\pi l\,}{n}\,=\,\frac{\,2n\ln2\,}{\,\pi\,}
+\,\frac{\pi}{\,12\, n\,} \,-\,\frac{7\pi^3}{\,1440\,n^3\,} \,+\,\frac{31\pi^5}{\,30\,240\,n^5\,}\,
-\,\frac{2159\pi^7}{\,4\,838\,400\,n^7\,}\,+\,\ldots
\ee
If, in contrast, $n$ is odd, then simply
\be\notag
\sum_{l=1}^{n-1} (-1)^{l+1} \csc\frac{\,\pi l\,}{n}\, = \,0\,.
\ee
\end{corollary}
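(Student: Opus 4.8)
The plan is to derive the whole statement from the asymptotical expansion of Theorem \ref{ejbhw08} by specializing its parameter to $\nu=\tfrac12 n$. The case of odd $n$ needs nothing new: it is property \eqref{39ufif}. Concretely, one pairs the $l$-th summand with the $(n-l)$-th, uses $\csc\frac{\pi(n-l)}{n}=\csc\frac{\pi l}{n}$, and observes that for odd $n$ the exponents $l+1$ and $(n-l)+1$ have opposite parity, so the two summands are negatives of one another; since $l=n-l$ is impossible for odd $n$ there is no fixed term, and the sum collapses to $0$.

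For even $n$ I would begin with the reflection identity $(-1)^{l+1}=-\cos(\pi l)=-\cos\frac{2\pi(n/2)l}{n}$, which gives $C_n=-C_n(\tfrac12 n)$; the problem is thereby reduced to inserting $\nu=\tfrac12 n$ into Theorem \ref{ejbhw08} and reversing the overall sign. It is worth noting why one should go through Theorem \ref{ejbhw08} rather than the more elementary representations obtained before it: at $\nu=\tfrac12 n$ the argument $2\pi\nu/n$ equals $\pi$, so the series of Theorem \ref{iuoyh9} (via its prefactor $\csc\frac{2\pi\nu}{n}$) and the second integral of Theorem \ref{jh97yb876vb} (via the term $(n-2\nu)\ctg\frac{2\pi\nu}{n}$) both degenerate into indeterminate forms there, whereas in Theorem \ref{ejbhw08} those singularities have already been resolved, the coefficients $\mathscr{H}_{2r-1}(\nu/n)$ being perfectly regular at $\varphi=\pi\nu/n=\tfrac\pi2$.

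The one genuinely substantive step is the evaluation of $\mathscr{H}_{2r-1}(\tfrac12)$. I would read off from the middle expression in \eqref{93402s} that at $\nu/n=\tfrac12$ it collapses to $\mathscr{H}_{2r-1}(\tfrac12)=-\tfrac{2}{\pi^{2r}}\Psi_{2r-1}(\tfrac12)$, then insert the classical special value $\Psi_{2r-1}(\tfrac12)=(2^{2r}-1)(2r-1)!\,\zeta(2r)$ together with $\zeta(2r)=\tfrac{(-1)^{r+1}(2\pi)^{2r}B_{2r}}{2\,(2r)!}$; after cancellation this gives $\mathscr{H}_{2r-1}(\tfrac12)=\tfrac{(-1)^{r}2^{2r}(2^{2r}-1)B_{2r}}{2r}$, a constant independent of $n$. (One may reach the same value directly from $\ctg(\tfrac\pi2+x)=-\tan x$ and the Bernoulli-number expansion of $\tan x$.) Substituting back, the leading term $-\tfrac{2n}{\pi}\ln\!\big(2\sin\tfrac\pi2\big)=-\tfrac{2n}{\pi}\ln 2$ turns into $+\tfrac{2n\ln 2}{\pi}$ after the sign reversal, and in the $r$-th term the elementary identity $(1-2^{1-2r})2^{2r}=2^{2r}-2=2(2^{2r-1}-1)$ converts $2\cdot\tfrac{(1-2^{1-2r})\pi^{2r-1}B_{2r}}{(2r)!\,n^{2r-1}}\mathscr{H}_{2r-1}(\tfrac12)$ into $\tfrac{2(-1)^{r}(2^{2r-1}-1)(2^{2r}-1)\pi^{2r-1}B_{2r}^{2}}{r\,(2r)!\,n^{2r-1}}$, so the global sign flip replaces $(-1)^r$ by $(-1)^{r+1}$ and reproduces precisely the claimed summand; the square $B_{2r}^2$ is simply the product of the Bernoulli number carried by the Stirling coefficient in \eqref{8943ycn9438} with the one hidden inside $\zeta(2r)$. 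The remainder term of Theorem \ref{ejbhw08}, namely $2\lambda\,\tfrac{(1-2^{1-2N})\pi^{2N-1}B_{2N}}{(2N)!\,n^{2N-1}}\mathscr{H}_{2N-1}(\tfrac12)$ with $0<\lambda<1$, is then $O(n^{1-2N})$ because $\mathscr{H}_{2N-1}(\tfrac12)$ is a fixed constant — which yields the displayed expansion (in fact for every even $n\ge2$, not only large ones) — and the four explicit terms come from taking $r=1,2,3,4$ with $B_2=\tfrac16$, $B_4=-\tfrac1{30}$, $B_6=\tfrac1{42}$, $B_8=-\tfrac1{30}$.

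The only point demanding real care is the bookkeeping of signs and powers of two — in particular the reflection $C_n=-C_n(\tfrac12 n)$ and the interaction of the factor $(1-2^{1-2r})$ with $2^{2r}$ — so I would keep everything symbolic until the numerical Bernoulli values are inserted at the very end; apart from that, the corollary is a straight specialization of Theorem \ref{ejbhw08}.
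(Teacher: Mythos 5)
Your proposal is correct and follows essentially the same route as the paper: specialize the expansion of Theorem \ref{ejbhw08} at $\nu=\tfrac12 n$ (with the sign flip $C_n=-C_n(\tfrac12 n)$), evaluate $\mathscr{H}_{2r-1}(\tfrac12)$, and dispose of odd $n$ by symmetry. The only cosmetic difference is that you derive the constant $\mathscr{H}_{2r-1}(\tfrac12)=(-1)^r 2^{2r-1}(2^{2r}-1)B_{2r}/r$ from the special value $\Psi_{2r-1}(\tfrac12)=(2^{2r}-1)(2r-1)!\,\zeta(2r)$, whereas the paper simply cites the known formula for the odd-order derivatives of the cotangent at $\pi/2$; the two are equivalent.
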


\begin{remark}\label{9843yndc3894}
The above asymptotic expansion may be compared to that of 
\be\notag
\sum_{l=1}^{n-1}  \csc\frac{\,\pi l\,}{n}\,  =  \,\frac{\,2n\,}{\pi}\!\left(\ln\frac{2n}{\pi}+\gamma \right)
-\frac{2}{\,\pi\,}\!\sum_{r=1}^{N-1} \frac{\,(-1)^{r+1} \big(2^{2r-1}-1\big)\pi^{2r} B^2_{2r}\,}{\,r\, (2r)!\, n^{2r-1}\,}
+ O\big(n^{1-2N}\big) \,,
\ee
which is probably due to Watson, see \cite[Theorems 6a,b]{iaroslav_18}, and which also contains the square of the Bernoulli numbers.
It is also interesting that unlike $\,\sum\csc(\pi l/n)$ the asymptotic expansion of $\,\sum(-1)^{l+1}\csc(\pi l/n)$ 
does not contain Euler's constant.
\end{remark}

\begin{proof}
Consider \eqref{874db3478v1}--\eqref{874db3478v2} at $x=1$ 
\begin{eqnarray}
&&\displaystyle \label{874db3478v5}
\sum_{l=1}^{n-1}  \cos\varphi l \,=\,\frac{\sin\big(n\varphi-\frac12\varphi\big)\,}{\,2\sin\frac12\varphi\,}\,
-\,\frac{1}{\,2\,}\,,\\[3mm]
&&\displaystyle \label{874db3478v6}
\sum_{l=1}^{n-1} \sin\varphi l \,=\,-\frac{\cos\big(n\varphi-\frac12\varphi\big)\,}{\,2\sin\frac12\varphi\,}
\,+\,\frac{1}{\,2\,}\ctg\frac\varphi2\,.
\end{eqnarray}
As $n$ tends to infinity, both series diverge, but if $\varphi$ is not 
congruent to $0 \pmod{2\pi}$, they still remain Ces\`aro summable
\be\label{874db3478v3}
\sum_{l=1}^{\infty} \cos\varphi l \,=\,-\,\frac{1}{\,2\,}\qquad(\text{C},1)\,,\qquad\quad\qquad
\sum_{l=1}^{\infty}\sin\varphi l \,=\,\frac{1}{\,2\,}\ctg\frac\varphi2\qquad(\text{C},1)\,,
\ee
since
\be\notag
\sum_{n=1}^N \sin\big(n\varphi-\tfrac12\varphi\big)\, =\,O(1)
\qquad \text{and} \qquad
\sum_{n=1}^N \cos\big(n\varphi-\tfrac12\varphi\big)\, =\,O(1)\,,
\ee
as $N\to\infty$, and hence 
\be\notag
\lim_{N\to\infty} \left\{\frac1N\sum_{n=1}^N \frac{\sin\big(n\varphi-\frac12\varphi\big)\,}{\,2\sin\frac12\varphi\,} \right\} =0
\quad \text{and} \quad
\lim_{N\to\infty} \left\{\frac1N\sum_{n=1}^N \frac{\cos\big(n\varphi-\frac12\varphi\big)\,}{\,2\sin\frac12\varphi\,} \right\} =0\,
\ee
respectively. Moreover, formulae \eqref{874db3478v3} remain also true 
if using other \emph{regular summation methods}, such as, for example, Euler summations $(\mathfrak{E})$ and $(\text{E},1)$, Abel summation $(\text{A})$,
Borel summation methods $(\text{B})$ and $(\text{B'})$, etc.\footnote{Readers interested in a more deep study 
of the divergent series, are kindly invited to refer to monograph \cite{hardy_02}. Note that the use of divergent
series for the derivation of asymptotic series is a frequent practice \cite{copson_01,dingle_01,erdelyi_01,evgrafov_03_eng,olver_01}.}

Let us now examine the series expansion for the digamma function given in \cite[Vol.~I, Sec.~1.17, Eq.~(5)]{bateman_01}.
This series possesses the property that the error, due to stopping at any term, is numerically less than the first term neglected. 
This means that the same series may also be written in the following form
\be\label{8976bt68}
\Psi(x)\,=\,-\frac1x-\gamma+\sum_{m=2}^{M-1} (-1)^m x^{m-1}\zeta(m) +  \lambda (-1)^M x^{M-1} \zeta(M)\,,
\ee
where $0<\lambda<1\,$, $M=3,4,5,\ldots\,$, and which has the advantage to hold for any $x>0$, while \cite[Vol.~I, Sec.~1.17, Eq.~(5)]{bateman_01} holds only in the unit disc.
Using this expansion, as well as \eqref{874db3478v3} and their formal derivatives with respect to $\varphi$, the second line
of \eqref{iuoyh9formula} becomes:
\begin{eqnarray}
\displaystyle && \displaystyle
\sum_{l=2}^{\infty}\left\{\Psi\!\left(\frac{l-1}{2n}\right) - \Psi\!\left(\frac{l+1}{2n}\right) 
-\Psi\!\left(\frac{l-1}{n}\right) + \Psi\!\left(\frac{l+1}{n}\right)  \!\right\}\sin\theta l \, =\, \notag\\[3.5mm]
\displaystyle && \displaystyle\qquad
=-2n\sum_{l=2}^{\infty} \frac{\,\sin\theta l\,}{\,l^2-1\,} \, - \,
\sin\theta\cdot\!\!\! \sum_{m=2}^{2N-2} (-1)^{m} \big(1-2^{1-m}\big)\zeta(m) \times \label{khgbitbi}\\[3.5mm]
&& \displaystyle \qquad \qquad \qquad \qquad 
\times \frac{\,2\mathscr{C}_{m-1}(\theta)+2^{m-1}\,}{n^{m-1} } \, +\, R_N(\lambda,n,\theta)\,,\notag 
\end{eqnarray}
where again for brevity we put $\,\theta\equiv2\pi \nu/n$, where
\be\label{iwauce0udf}
\mathscr{C}_{m-1}(\theta)\,=\,
\begin{cases}
\displaystyle 0\,, \qquad & m=2r-1\,, \quad r\in\mathbbm{N}\,\\[3mm]
\displaystyle \frac{\,(-1)^{\frac{m}{2}-1}\,}{2}\left(\ctg\frac\varphi2\right)^{(m-1)}_{\varphi=\theta}\,, \qquad & m=2r\,, \quad r\in\mathbbm{N}
\end{cases}
\ee
and where $R_N(\lambda, n,\theta)$ stands for the remainder.
Proceeding with the first sum similarly to \eqref{675v67v}, one may easily show that
\be\label{9834yc20398}
\sum_{l=2}^{\infty} \frac{\,\sin\theta l\,}{\,l^2-1\,} \, =\,\left[\frac14 - \ln\!\left(\!2\sin\frac\theta2\right)\right]\sin\theta\,.
\ee
Furthermore
\be\label{998yncv93}
\left(\ctg\frac\varphi2\right)^{(2r-1)}_{\varphi=\theta} = \left.\frac{1}{\,2^{2r-1}\,} \cdot
\frac{d^{2r-1} \ctg\varphi}{d\varphi^{2r-1}}\right|_{\varphi=\frac12\theta}\equiv\,
\frac{\mathscr{H}_{2r-1}(\nu/n)}{\,2^{2r-1}\,}\,.
\ee
Now, employing again \eqref{8976bt68}, we see at once that a part of the last sum in \eqref{khgbitbi}
reduces to
\begin{eqnarray}
\displaystyle \: & \displaystyle
\sum_{m=2}^{2N-2} (-1)^{m} \big(1-2^{1-m}\big)\zeta(m) n^{1-m} 2^{m-1}  = 
\sum_{m=2}^{2N-2} (-1)^{m} \big(2^{m-1}-1\big)\zeta(m) n^{1-m}  =\notag\\[3mm]
& \displaystyle  =\, \Psi\!\left(\frac{2}{n}\right) - \Psi\!\left(\frac{1}{n}\right)-\,\frac{n}{2}\, +\,
\lambda\big(2^{2N-2}-1\big)\zeta(2N-1) n^{2-2N} \,,\label{98034ycn}
\end{eqnarray}
$0<\lambda<1$.
Substituting \eqref{998yncv93} into \eqref{iwauce0udf} and \eqref{9834yc20398} into \eqref{khgbitbi}, as well 
as accounting for \eqref{98034ycn}, formula \eqref{iuoyh9formula} from Theorem~\ref{iuoyh9} becomes
\begin{eqnarray}
\displaystyle 
C_n(\nu)
&\displaystyle =\,  -\frac{\,2n\,}{\,\pi\,}\ln\!\left(\!2\sin\frac\theta2\right)
+ \,\frac{2}{\,\pi\,}\!\sum_{r=1}^{N-1} \frac{\,(-1)^{r-1}\big(1-2^{1-2r}\big)\zeta(2r)\,}{\, (2n)^{2r-1}\,} 
\mathscr{H}_{2r-1}(\nu/n)  \, +\notag \\[4mm]
&\displaystyle \qquad
+\, 2\lambda \frac{\,(-1)^{N-1}\big(1-2^{1-2N}\big)\zeta(2N)\,}{\, \pi (2n)^{2N-1}\,} 
\mathscr{H}_{2N-1}(\nu/n) \,,\label{392u8rcm2190}
\end{eqnarray}
where again $0<\lambda<1$.
Interestingly, the latter result depends on $2n$ rather than on $n$.
Finally, using the famous result
\be\label{894yrnbcssw}
\zeta(2r)\,=\,\frac{(-1)^{r+1}(2\pi)^{2r} B_{2r}}{\,2\,(2r)!\,}\,,\qquad
r\in\mathbbm{N}\,,
\ee
established by Euler in the first half of the XVIIIth century,
we immediately arrive at the expansion announced in Theorem~\ref{ejbhw08}. 
The second representation of $\mathscr{H}_{2r-1}(\nu/n)$ in \eqref{93402s}, that via two polygamma functions, is obtained by differentiating 
$(2r-1)$ times the reflection formula of the digamma function $\pi\ctg\varphi=\Psi(1-\varphi/\pi)-\Psi(\varphi/\pi)$
with respect to $\varphi$, and then by setting $\varphi=\theta/2=\pi\nu/n$. The third representation of $\mathscr{H}_{2r-1}(\nu/n)$
directly follows from the relationship between the derivatives of the cotangent at rational multiples of $\pi$ and
the Bernoulli polynomials, see e.g.~\cite[p.~218]{cvijovic_04}.

Now, retaining only the dominant term in the asymptotic expansion from Theorem~\ref{ejbhw08}, we 
obtain the formula given in Corollary~\ref{ejbhw08v2}. Note that it is not valid for $\nu=\frac16n$, nor for $\nu=\frac56n$,
because at these points, independently of $n$, the argument of the logarithm equals one. But the definition of the asymptotic equivalence
does not imply the possibility to divide by a quantity, which is identically equal to zero.

Finally, setting $\nu=\frac12n$, $n$ is even, in the asymptotic expansion from Theorem~\ref{ejbhw08} and remarking that
\be\notag
\left.\frac{d^{2r-1}}{d\varphi^{2r-1}}\, \ctg\varphi \right|_{\varphi=\frac12\pi} \!=\,(-1)^r 
\frac{\,2^{2r-1} \left(2^{2r}-1 \right) B_{2r}\,}{r}\,, \qquad r\in\mathbbm{N}\,,
\ee
see e.g.~\cite[Corollary 1]{cvijovic_04}, as well as bearing in mind that $C_n=0$ for odd $n$, we arrive at Corollary~\ref{9784ryb}.
\end{proof}

\section{Bounds, inequalities, approximate equalities and asymptotic expansion}\label{2093u0j}
We already have bounds \eqref{98743cb38}, that we have obtained earlier; however, they are very rough and for many problems may be too inaccurate.
In Theorem~\ref{ejbhw082} we provide both upper and lower bounds, which are not only much sharper, but also asymptotically vanishing.
It may also be useful in some cases to have a suitable approximation for $C_n(\nu)$.
In Theorem~\ref{lk7d3mf4}, we give a simple approximation for $C_n(\nu)$, 
which can be useful for applications. Besides. it is well known that one and the same function may have asymptotic expansions involving 
different asymptotic sequences \cite[Chapt.~1]{erdelyi_01}, see also 
\cite{copson_01,dingle_01,evgrafov_03_eng,evgrafov_01_eng,olver_01}. Theorems~\ref{lk7d3mf5} and \ref{lk7d3mf5v2} give, in this sense,
alternative asymptotic expansions for $C_n(\nu)$,
which, in some situations, may be more desirable than the expansion obtained in Theorem~\ref{ejbhw08}, 
and which may be used as a very good approximation for $C_n(\nu)$ as well.
Since Theorems~\ref{lk7d3mf4}--\ref{lk7d3mf5v2} are closely related to each other, their proofs are given together.

\begin{theorem}[Bounds and inequalities for $\bm{C_n(\nu)}$]\label{ejbhw082}
For $n=2,3,4\ldots\,$ and $1<\nu<n$, the sum $C_n(\nu)$ may be bounded from below and from above in the following way: 
\be\notag
-\frac{\,2n\,}{\,\pi\,}\ln\!\left(2\sin\frac{\pi\nu}{n}\right)  +A(n,\nu) <
\sum_{l=1}^{n-1} \cos\frac{2\pi \nu l}{n}\cdot\csc\frac{\,\pi l\,}{n} < 
-\frac{\,2n\,}{\,\pi\,}\ln\!\left(2\sin\frac{\pi\nu}{n}\right)  +B(n,\nu),
\ee
where $\displaystyle A(n,\nu) \,\equiv\, - \,\frac{\pi }{\,12n\,}\csc^2\!\frac{\pi\nu}{n}\,$ and 
\be\notag
B(n,\nu) \,\equiv\,- \,\frac{\pi }{\,12n\,}\csc^2\!\frac{\pi\nu}{n} \, +
\, \frac{7\pi^3 }{\,1440\,n^3\,}\left\{1 \, +\,
2\cos^2\!\frac{\pi\nu}{n}\right\} \csc^4\!\frac{\pi\nu}{n}\,.
\ee
\end{theorem}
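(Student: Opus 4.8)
The plan is to obtain both bounds by taking $N=2$ in the complete asymptotical expansion \eqref{8943ycn9438} of Theorem~\ref{ejbhw08}. For $N=2$ the finite sum over $r$ there collapses to its single $r=1$ term, while the last term of \eqref{8943ycn9438} becomes a genuine remainder whose size is sandwiched between $0$ and its value at $\lambda=1$ because $0<\lambda<1$; the whole argument thus reduces to evaluating $\mathscr{H}_1(\nu/n)$ and $\mathscr{H}_3(\nu/n)$ explicitly and fixing the sign of that remainder.

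First I would compute $\mathscr{H}_1$ and $\mathscr{H}_3$ straight from the definition $\mathscr{H}_{2r-1}(\nu/n)=\frac{d^{2r-1}}{d\varphi^{2r-1}}\ctg\varphi\big|_{\varphi=\pi\nu/n}$ recorded in \eqref{93402s}. One differentiation gives $\mathscr{H}_1(\nu/n)=-\csc^2(\pi\nu/n)$; three differentiations, simplified via $\ctg^2\varphi=\csc^2\varphi-1$, give $\mathscr{H}_3(\nu/n)=-2\bigl(1+2\cos^2(\pi\nu/n)\bigr)\csc^4(\pi\nu/n)$ (both negative, consistently with the closing remark of Theorem~\ref{ejbhw08}). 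Feeding $B_2=\tfrac16$ and $\mathscr{H}_1$ into the $r=1$ term of \eqref{8943ycn9438} and simplifying produces precisely $A(n,\nu)=-\frac{\pi}{12n}\csc^2(\pi\nu/n)$, so that
\[
C_n(\nu)=-\frac{2n}{\pi}\ln\!\left(2\sin\frac{\pi\nu}{n}\right)+A(n,\nu)+\mathcal{R},
\]
where $\mathcal{R}$ denotes the $N=2$ remainder of \eqref{8943ycn9438}.

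Next I would evaluate $\mathcal{R}$. Substituting $B_4=-\tfrac1{30}$ and the value of $\mathscr{H}_3$ into that remainder term, the two minus signs --- one from $B_4<0$, one from $\mathscr{H}_3<0$ --- cancel, leaving
\[
\mathcal{R}=\lambda\,\frac{7\pi^3}{1440\,n^3}\bigl(1+2\cos^2(\pi\nu/n)\bigr)\csc^4(\pi\nu/n)=\lambda\bigl(B(n,\nu)-A(n,\nu)\bigr).
\]
Since $\csc^4(\pi\nu/n)>0$ for every admissible $\nu$ (those with $\sin(\pi\nu/n)\neq0$) and $0<\lambda<1$ \emph{strictly}, we get $0<\mathcal{R}<B(n,\nu)-A(n,\nu)$; adding $-\frac{2n}{\pi}\ln(2\sin\frac{\pi\nu}{n})+A(n,\nu)$ throughout yields the asserted strict two-sided inequality.

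The proof is thus essentially computational: the one place needing care is getting the third derivative of $\ctg\varphi$ right and tracking the signs, which is exactly what makes the $O(n^{-3})$ correction in $B(n,\nu)$ positive --- so it enters the upper bound but not the lower one --- whereas strictness of both inequalities is automatic from $0<\lambda<1$ together with $\csc(\pi\nu/n)\neq0$. One could instead try to bound the tail of the series of Theorem~\ref{iuoyh9} directly, but building on the explicit-remainder expansion of Theorem~\ref{ejbhw08} is by far the shortest route.
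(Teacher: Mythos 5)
Your proof is correct and follows essentially the same route as the paper: both rest on the explicit-remainder expansion \eqref{8943ycn9438} of Theorem~\ref{ejbhw08} with the negativity of $\mathscr{H}_{2r-1}$, the paper phrasing it as an enveloping-series argument between the $N=2$ and $N=3$ partial sums, while you equivalently keep $N=2$ and squeeze the remainder between $0$ and its $\lambda=1$ value. Your explicit computations of $\mathscr{H}_1=-\csc^2(\pi\nu/n)$ and $\mathscr{H}_3=-2\bigl(1+2\cos^2(\pi\nu/n)\bigr)\csc^4(\pi\nu/n)$, and of the resulting coefficients, check out.
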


\begin{corollary}[Bounds and inequalities for $\bm{C_n}$]\label{4cvrc43fc}
For $n=2,4,6,\ldots\,$ the values of the alternating finite sum $\,\sum(-1)^{l+1}\csc\big(\pi l/n\big)$ 
always lie in the interval
\be\notag
\frac{\,2n\ln2\,}{\,\pi\,}
+\,\frac{\pi}{\,12\, n\,} \,-\,\frac{7\pi^3}{\,1440\,n^3\,} \,<
\sum_{l=1}^{n-1} (-1)^{l+1} \csc\frac{\,\pi l\,}{n}\,<\,\frac{\,2n\ln2\,}{\,\pi\,}
+\,\frac{\pi}{\,12\, n\,} 
\ee
For $n=3,5,7,\ldots\,$ this finite sum vanishes.
\end{corollary}

\begin{proof}
Differentiating $m-1$ times \cite[Vol.~I, Sec.~1.9, Eq.~(10)]{bateman_01} with respect to $z$, we get 
\be\notag
\Psi_{m}(x)\,=\,(-1)^{m+1} \, m! \sum_{l=0}^\infty \frac{1}{\,(x+l)^{m+1}} \,=\,(-1)^{m+1} \, m! \,\zeta(m+1,x)\,,
\qquad m\in\mathbbm{N}\,.
\ee
Hence $\,\Psi_{2r-1}(x)>0\,$ for $x>0$. Therefore $\,\Psi_{2r-1}(x)+\Psi_{2r-1}(1-x)>0\,$ for $0<x<1$. 
Considering now the definition of $\mathscr{H}_{2r-1}(\nu/n) $ given in Theorem~\ref{ejbhw08}, we see at once that
\be\notag
\mathscr{H}_{2r-1}\left(\frac{\nu}{n}\right) < 0\,,
\ee
whence
\be\notag
\sgn\!\left[ \frac{\,\big(1-2^{1-2r}\big)\pi^{2r-1} B_{2r}\,}{\,(2r)!\, n^{2r-1}\,} 
\mathscr{H}_{2r-1}\left(\frac{\nu}{n}\right)  \right] =\,(-1)^r\,.
\ee
Thus, the series in the right--hand side of \eqref{8943ycn9438} possesses the usual property 
concerning the magnitude and sign of the remainder. 
Setting $N=2$ and $N=3$ into \eqref{8943ycn9438} and accounting for the sign yields both inequalities 
stated in Theorem~\ref{ejbhw082}.
By a similar line of reasoning, we deduce the result announced in Corollary~\ref{4cvrc43fc}.
\end{proof}

\begin{theorem}[A simple approximation for $\bm{C_n(\nu)}$]\label{lk7d3mf4}
A simple and relatively good approximation for the sum $C_n(\nu)$ is given by the following expression
\be\notag
\sum_{l=1}^{n-1} \cos\frac{2\pi \nu l}{n}\cdot\csc\frac{\,\pi l\,}{n} 
\,\approx  -\frac{\,2n\,}{\,\pi\,}\ln\!\left(2\sin\frac{\pi\nu}{n}\right)
 \, - \,\frac{\pi }{\,12n\,}\csc^2\!\frac{\pi\nu}{n} \,+\, \frac{7n}{\,480\pi \nu^4\,}\,.
\ee
This approximation is quite accurate, the right--hand side is asymptotically equivalent to $C_n(\nu)$ 
at large $n$, but one should bear in mind that 
the approximation error does not tend to zero as $n\to\infty$.
\end{theorem}

\begin{theorem}[An asymptotic expansion and accurate approximation for $\bm{C_n(\nu)}$]\label{lk7d3mf5}
The sum $C_n(\nu)$, $1<\nu<n$, admits the following asymptotic expansion
\be\notag
\begin{array}{ll}
\displaystyle 
\sum_{l=1}^{n-1} \cos\frac{2\pi \nu l}{n}\cdot\csc\frac{\,\pi l\,}{n} 
&\displaystyle = \, -\frac{\,2n\,}{\,\pi\,}\ln\!\left(2\sin\frac{\pi\nu}{n}\right)
 \, - \,\frac{\pi }{\,12n\,}\csc^2\!\frac{\pi\nu}{n} \,+\, n f(\nu) \, + \\[6mm]
&\displaystyle\qquad
+ \frac{7\pi^3}{\,21\,600 \,n^3\,}  +\, o\big(n^{-3}\big)\,,\qquad\quad n\to\infty\,,
\end{array}
\ee
where 
\be\notag
f\left(\nu\right) =-\frac{1}{\,\pi\,}\left\{4H_{2\nu}-2H_{\nu}-2\ln\nu-4\ln2-2\gamma-\frac{1}{12\nu^2} \right\}\,.
\ee
It may also be used as a very accurate approximation for $C_n(\nu)$, whose error 
rapidly tends to zero as $n\to\infty$.
\end{theorem}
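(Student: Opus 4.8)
The plan is to prove Theorems~\ref{lk7d3mf4} and~\ref{lk7d3mf5} together, taking as starting point the first finite representation of Theorem~\ref{iuo4ycn39},
\[
C_n(\nu)\,=\,S_n\,-\,2\sum_{l=1}^{\nu}\ctg\frac{\,(2l-1)\pi\,}{2n}\,,
\]
which reduces everything to the behaviour of a short cotangent sum, the complete asymptotics of Watson's sum $S_n$ being already at our disposal. First I would substitute the expansion recalled in Remark~\ref{9843yndc3894},
\[
S_n\,=\,\frac{\,2n\,}{\pi}\!\left(\ln\frac{2n}{\pi}+\gamma\right)-\frac{\pi}{\,36n\,}+\frac{7\pi^{3}}{\,21\,600\,n^{3}\,}+O\!\left(n^{-5}\right),
\]
and, since $\nu$ is fixed so that $\pi\nu/n\to0$, expand each cotangent by the Taylor series $\ctg x=\tfrac1x-\tfrac{x}{3}-\tfrac{x^{3}}{45}-\cdots$ of the cotangent near the origin (Euler's series, tied to the Bernoulli numbers).

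Summing the resulting terms over $l=1,\dots,\nu$ needs only the elementary closed forms $\sum_{l=1}^{\nu}(2l-1)=\nu^{2}$, $\sum_{l=1}^{\nu}(2l-1)^{3}=\nu^{2}(2\nu^{2}-1)$ and the identity $\sum_{l=1}^{\nu}\tfrac{1}{2l-1}=H_{2\nu}-\tfrac12 H_{\nu}$, the last obtained by splitting $H_{2\nu}$ into its odd- and even-indexed parts. This already produces $C_n(\nu)$ as an expansion in powers of $1/n$ whose order-$n$ coefficient is built from $H_{2\nu}$ and $H_{\nu}$. The remaining work is purely one of re-packaging: I would expand $\ln\!\big(2\sin\tfrac{\pi\nu}{n}\big)=\ln\tfrac{2\pi\nu}{n}-\tfrac{\pi^{2}\nu^{2}}{6n^{2}}-\tfrac{\pi^{4}\nu^{4}}{180 n^{4}}-\cdots$ and $\csc^{2}\tfrac{\pi\nu}{n}=\tfrac{n^{2}}{\pi^{2}\nu^{2}}+\tfrac13+\tfrac{\pi^{2}\nu^{2}}{15 n^{2}}+\cdots$, and then check that the three ``closed'' blocks on the right of Theorem~\ref{lk7d3mf5}, namely $-\tfrac{2n}{\pi}\ln(2\sin\tfrac{\pi\nu}{n})$, $-\tfrac{\pi}{12n}\csc^{2}\tfrac{\pi\nu}{n}$ and $nf(\nu)$, reproduce respectively the $n\ln n$ contribution, a part of the $O(n)$ together with the whole $O(1/n)$ contribution, and the remaining harmonic-number $O(n)$ contribution of $C_n(\nu)$. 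The delicate points are that the logarithmic terms merge into the single closed logarithm only after the identity $\ln\tfrac{2n}{\pi}+\ln\tfrac{2\pi\nu}{n}-\ln(4\nu)=0$, and that all contributions of orders $n$, $1$ and $n^{-1}$ must cancel \emph{identically}, so that nothing survives between the $O(n)$ block and $\tfrac{7\pi^{3}}{21\,600\,n^{3}}$; carrying out this cancellation while retaining enough terms of every series to reach order $n^{-3}$ is the bulk of the labour, and is the main (though entirely mechanical) obstacle. The outcome is precisely Theorem~\ref{lk7d3mf5}, with $o(n^{-3})$ in fact improvable to $O(n^{-5})$ since every constituent series involves only odd negative powers of $n$.

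Theorem~\ref{lk7d3mf4} then follows from Theorem~\ref{lk7d3mf5} by one further elementary expansion, this time of $f(\nu)$ itself as $\nu\to\infty$: inserting the asymptotic expansion $H_{m}=\ln m+\gamma+\tfrac1{2m}-\tfrac1{12m^{2}}+\tfrac1{120m^{4}}-\cdots$ into the bracket defining $f(\nu)$ makes the logarithms, the Euler constants and the $\nu^{-2}$ terms cancel, leaving $f(\nu)=\tfrac{7}{480\pi\nu^{4}}+O(\nu^{-6})$. Replacing $nf(\nu)$ by $\tfrac{7n}{480\pi\nu^{4}}$ and discarding the negligible $\tfrac{7\pi^{3}}{21\,600\,n^{3}}$ gives the three-term approximation~\eqref{34cc44efcwef}; its error equals $n\big(f(\nu)-\tfrac{7}{480\pi\nu^{4}}\big)+O(n^{-3})=O\!\big(n\nu^{-6}\big)$, which explains both why it is accurate for moderate $\nu$ and why, unlike in Theorem~\ref{lk7d3mf5}, it does not tend to zero as $n\to\infty$ for fixed $\nu$. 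As an independent consistency check, the same coefficient $\tfrac{7}{480\pi\nu^{4}}$ can be recovered directly from Theorem~\ref{ejbhw08} with $N=2$ by replacing the remainder $2\lambda\tfrac{(1-2^{-3})\pi^{3}B_{4}}{4!\,n^{3}}\mathscr{H}_{3}(\nu/n)$ with its leading behaviour as $\pi\nu/n\to0$, namely $\tfrac{7\pi^{3}}{1440\,n^{3}}\csc^{4}\tfrac{\pi\nu}{n}\big(1+2\cos^{2}\tfrac{\pi\nu}{n}\big)\sim\tfrac{7n}{480\pi\nu^{4}}$.
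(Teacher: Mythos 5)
Your proposal is correct, but it follows a genuinely different route from the paper's. The paper proves Theorem~\ref{lk7d3mf5} jointly with Theorem~\ref{lk7d3mf4} starting from its own expansion \eqref{8943ycn9438}: it expands each cotangent derivative $\mathscr{H}_{2r-1}(\nu/n)$ for large $n$ as $-n^{2r}(2r-1)!/(\pi\nu)^{2r}+K_r+O(n^{-2})$, collects the leading parts into the series $-\pi^{-1}\sum_{r\ge2}(1-2^{1-2r})B_{2r}/(r\,\nu^{2r})$, and re-sums that series into the harmonic-number expression for $f(\nu)$ via the Stirling formula for $H_n$, the constant $7\pi^3/(21\,600\,n^3)$ arising from the constant term $K_2=-2/15$. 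You instead start from the exact finite identity $C_n(\nu)=S_n-2\sum_{l\le\nu}\ctg\frac{(2l-1)\pi}{2n}$ of Theorem~\ref{iuo4ycn39}, import Watson's complete asymptotics of $S_n$ from Remark~\ref{9843yndc3894}, and Taylor-expand the finitely many cotangents; the identity $\sum_{l\le\nu}(2l-1)^{-1}=H_{2\nu}-\tfrac12H_\nu$ then delivers $f(\nu)$ exactly rather than as a re-summed asymptotic series, and the coefficient matching at orders $n$, $n^{-1}$ and $n^{-3}$ against the three closed blocks does work out as you describe (including the cancellation of the $\pm n/(12\pi\nu^2)$ terms and the logarithm identity). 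Your route is more elementary and sidesteps the somewhat informal interchange of the two asymptotic scales (in $1/n$ and in $1/\nu$) that the paper performs when passing from \eqref{98432fny89} to \eqref{894cb937832x}; its price is that it is not self-contained (it leans on the externally quoted expansion of $S_n$ to order $n^{-5}$, which the paper only states in a remark), and it recovers the enveloping bounds $0<f(\nu)<7/(480\pi\nu^4)$ needed for the error discussion of Theorem~\ref{lk7d3mf4} only a posteriori from the large-$\nu$ expansion of the harmonic numbers, whereas the paper obtains them directly from the sign-alternation of the terms in \eqref{8943ycn9438}.
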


\begin{theorem}[A cosecant--free asymptotic expansion of $\bm{C_n(\nu)}$]\label{lk7d3mf5v2}
The sum $C_n(\nu)$, $1<\nu<n$, admits the following cosecant--free asymptotic expansion
\be\notag
\begin{array}{ll}
\displaystyle 
\sum_{l=1}^{n-1} \cos\frac{2\pi \nu l}{n}\cdot\csc\frac{\,\pi l\,}{n} 
&\displaystyle = \, -\frac{\,2n\,}{\,\pi\,}\ln\!\left(2\sin\frac{\pi\nu}{n}\right)
 \,+\, n g(\nu) \, - \frac{\pi}{\,36 \,n\,} - \\[6mm]
&\displaystyle\qquad
- \frac{\,\left(120\nu^2-7\right)\pi^3\,}{\,21\,600 \,n^3\,}  +\, o\big(n^{-3}\big)\,,\qquad\quad n\to\infty\,,
\end{array}
\ee
where 
\be\notag
g\left(\nu\right) =-\frac{1}{\,\pi\,}\Big\{4H_{2\nu}-2H_{\nu}-2\ln\nu-4\ln2-2\gamma \Big\}\,.
\ee
It may also be used as an approximation for $C_n(\nu)$, whose error 
tends to zero as $n\to\infty$.
\end{theorem}

\begin{figure}[!t]   
\centering
\includegraphics[width=0.8\textwidth]{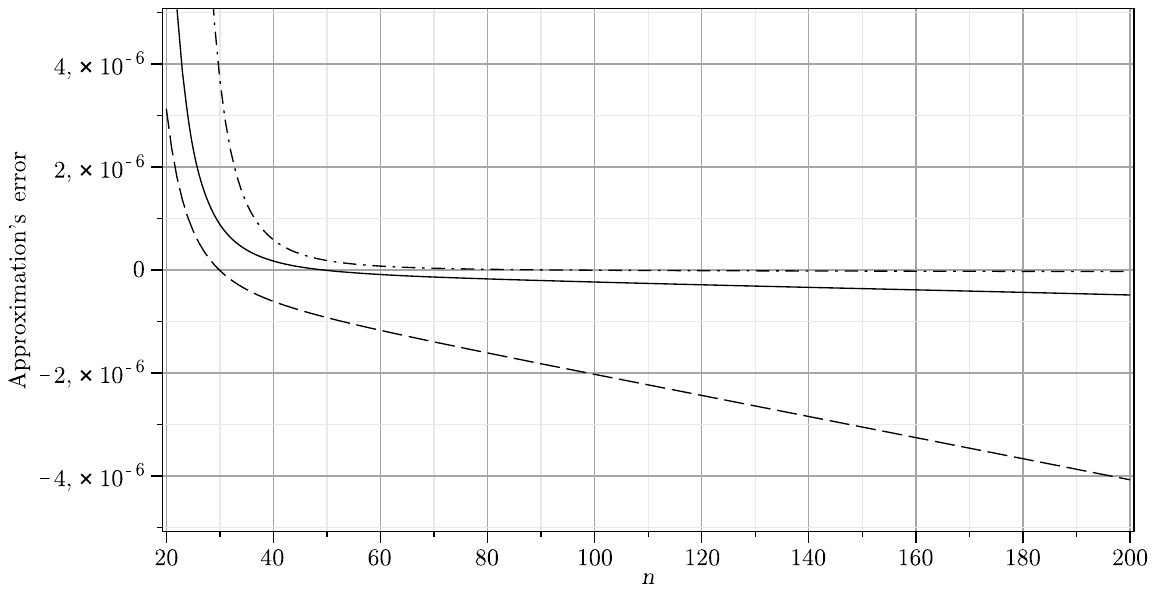}
\vspace{-0.5em}
\caption{The difference between $C_n(\nu)$ and its approximation, provided by
Theorem~\ref{lk7d3mf4}, as a function of $n$ for $\nu=7$ (dashed line), $\nu=10$ (solid line)
and $\nu=16$ (dash--dotted line). Note that the greater the argument $\nu$, the better the approximation
[roughly the approximation error is proportional to $\nu^{-6}$]. 
One should also bear in mind that $C_{100}(7)\approx53$,
$C_{100}(10)\approx31$ and $C_{100}(16)\approx2$, so that in all these cases the approximation is rather
accurate (and it remains such at least for moderate values of $n$, e.g.~for $n=10\,000$ and $\nu=7$
the approximation error is about $-2\times10^{-4}$, while $C_{10\,000}(7)\approx34\,541$).
}
\label{937fybfe4}
\end{figure}

\begin{corollary}\label{lpo3sai}
Let $n$ be a multiple of 6. If $\nu=\frac16n$ or $\nu=\frac56n$, then as $n\to\infty$ the sum $C_n(\nu)$ tends to zero:
\be\notag
\lim_{n\to\infty}\sum_{l=1}^{n-1} \cos\frac{\pi l}{3}\cdot\csc\frac{\,\pi l\,}{n} \,=\,0\,. 
\ee
\end{corollary}

\begin{proof}
From the inequalities established in the previous theorem, it follows that
\be\label{68v76ugi}
\begin{array}{ll}
\displaystyle 
C_n(\nu)   &\displaystyle = \,  -\frac{\,2n\,}{\,\pi\,}\ln\!\left(2\sin\frac{\pi\nu}{n}\right)
 \, - \,\frac{\pi }{\,12n\,}\csc^2\!\frac{\pi\nu}{n} \,+\, \\[6mm]
&\displaystyle\qquad \qquad \qquad 
\,+\, \frac{7\lambda\pi^3 }{\,1440\,n^3\,}\left\{1 \, +\,
2\cos^2\!\frac{\pi\nu}{n}\right\} \csc^4\!\frac{\pi\nu}{n} \, , 
\end{array}
\ee
where $0<\lambda<1$. Expanding the last term into the power series in $n$, $n\to\infty$, we have 
\be\label{8uyb098yb}
\frac{7\pi^3 }{\,1440\,n^3\,}\left\{1 \, +\,
2\cos^2\!\frac{\pi\nu}{n}\right\} \csc^4\!\frac{\pi\nu}{n} \,=\, \frac{7n }{\,480\pi \nu^4\,}
+\frac{7\pi^3}{\,21\,600 \, n^3\,}+O\big(n^{-5}\big)\,,
\ee
where the expression in the left--hand side is clearly positive.
Substituting this result into \eqref{68v76ugi}, we obtain
\be 
C_n(\nu)
\,\approx  -\frac{\,2n\,}{\,\pi\,}\ln\!\left(2\sin\frac{\pi\nu}{n}\right)
 \, - \,\frac{\pi }{\,12n\,}\csc^2\!\frac{\pi\nu}{n} \,+\, n f(\nu)\, , 
\ee
where $f\left(\nu\right)$ is a bounded function 
\be\notag
0< f\left(\nu\right) < \frac{7}{\,480\pi \nu^4\,}\,.
\ee
Proceeding analogously with the term corresponding to $r=3$ in \eqref{8943ycn9438},
whose asymptotics
\be
\sim \, -\frac{31}{\,4032\pi \nu^6\,}\,,\qquad n\to\infty\,,
\ee
we see that at sufficiently large $n$ and fixed $\nu$ the function $f(\nu)$ obeys these inequalities 
\be
\frac{7}{\,480\pi \nu^4\,}-\frac{31}{\,4032\pi \nu^6\,}  < f\left(\nu\right) < \frac{7}{\,480\pi \nu^4\,}\,.
\ee
But numerically, such a correction from below (due to the term with $r=3$) is almost negligible (e.g.~for $\nu\geqslant8$ it is lesser than $1\%$).
Hence, $f(\nu)$ is practically equal to its upper bound, whence we get the approximation stated in Theorem~\ref{lk7d3mf4}.

\begin{figure}[!t]   
\centering
\includegraphics[width=0.8\textwidth]{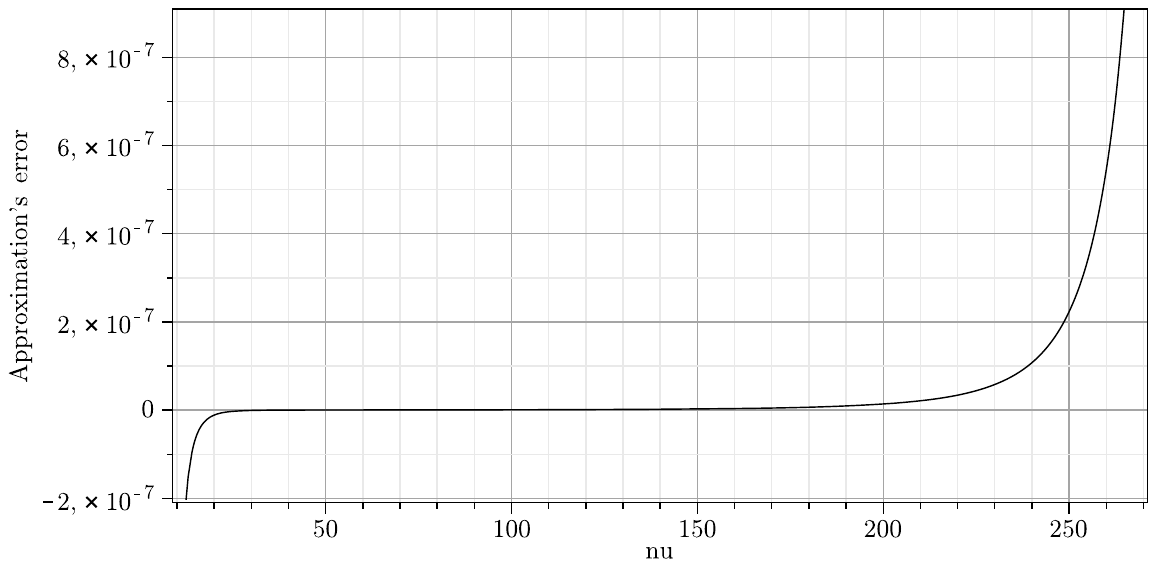}
\vspace{-0.5em}
\caption{The approximation error for $C_{300}(\nu)$ as a function of $\nu$, where $\nu\in[10,270]$
(approximation given by Theorem~\ref{lk7d3mf4}).
For the sake of comparison: 
$C_{300}(10)\approx+299$,
$C_{300}(20)=C_{300}(280)\approx+168$,
$C_{300}(50)=C_{300}(250)\approx-3\times10^{-3}$,
$C_{300}(100)=C_{300}(200)\approx-105$,
$C_{300}(150)\approx-132$ (see Fig.~\ref{g6hytbhfw2} for the graph of $C_{300}(\nu)$).
}
\label{937fybfe5}
\end{figure}

Obviously, we can also 
take into account the contribution of higher terms in the asymptotic expansion from Theorem~\ref{ejbhw08}. 
Remarking that for a sufficiently large $n$ 
\be\label{83490cn834}
\left.\frac{d^{2r-1}}{d\varphi^{2r-1}}\, \ctg\varphi \right|_{\varphi=\frac{\pi\nu}{n}} \!=
\,-\frac{n^{2r}(2r-1)!}{(\pi\nu)^{2r}}+K_r+O\big(n^{-2}\big)\,, \qquad r\in\mathbbm{N}\,,
\ee
where $K_r$ is a constant term, depending neither on $n$ nor on $\nu$, we obtain
\be\label{98432fny89}
f(\nu)\,=\,-\frac{1}{\,\pi\,}\!\sum_{r=2}^{N-1} \frac{\,\big(1-2^{1-2r}\big) B_{2r}\,}{\,r\, \nu^{2r}\,} +\ldots
\ee
Recalling the Stirling formula for the harmonic numbers
\be\notag
H_n\,= \ln n \,+\, \gamma \,+\, \frac{1}{\,2n\,} \,-\, \frac{1}{\,2\,}\!\sum_{r=1}^{N-1} \frac{B_{2r}}{\,r\,n^{2r}\,}
\,-\, \frac{\varepsilon \, B_{2N}}{\,2N n^{2N}\,}\,, \qquad\qquad  0<\varepsilon<1\,,
\ee
we see that \eqref{98432fny89} is the asymptotic expansion of the difference of two harmonic numbers
with some additional terms, \emph{viz.}
\be\label{894cb937832x}
\begin{array}{lll}
\displaystyle 
f(\nu)\,&\displaystyle\: =\,-\frac{1}{\,\pi\,}\left\{\sum_{r=2}^{N-1} \frac{\, B_{2r}\,}{\,r\, \nu^{2r}\,}
-2\!\sum_{r=2}^{N-1} \frac{\, B_{2r}\,}{\,r \left(2\nu\right)^{2r}\,} \right\} +\ldots \\[7mm]
&\displaystyle =\,-\frac{1}{\,\pi\,}\left\{4H_{2\nu}-2H_{\nu}-2\ln\nu-4\ln2-2\gamma-\frac{1}{12\nu^2} \right\}
\approx \, \frac{7}{\,480\pi \nu^4\,}\,.
\end{array}
\ee
Note that since $\frac{7}{\,480\pi \nu^4\,}$
is just an approximation for $f(\nu)$, the overall approximation error, given by Theorem~\ref{lk7d3mf4}, linearly grows 
with $n$, but with a very small slope, which roughly is inversely proportional to $\nu^{6}$.
In contrast, the approximation error of the alternative asymptotic expansion given by Theorem~\ref{lk7d3mf5}
does tend to $0$ as $n\to\infty$. In fact, the latter asymptotic expansion 
does not contain terms $O(1)$, nor $O(n^{-1})$ nor even $O(n^{-2})$. As to the order $n^{-3}$,
this term is obtained from the constant term in \eqref{83490cn834} $K_2=-2/15$, and from the corresponding contribution 
of the sum in the right--hand side of \eqref{8943ycn9438}. Note that this term does not depend on $\nu$ at all,
and thus, may be regarded as a small \emph{bias} if we study $C_n(\nu)$ at large fixed $n$. 
Furthermore, it can be reasonably expected that in virtue of \eqref{83490cn834},
the remaining terms in the asymptotics of $C_n(\nu)$ should be $O\!\left(n^{-5}\right)$,
the statement which can be readily verified empirically. 

Finally, in some cases, the presence of the square of the cosecant 
in the asymptotic expansion of $C_n(\nu)$ may be undesirable. In such a situation, we may get rid of it
by expanding the cosecant term into power series
\be\notag
\csc^2\!\frac{\pi\nu}{n} \,=\, \frac{n^2 }{\,\pi^2 \nu^2\,} +\frac13
+\frac{\pi^2\nu^2}{\,15\, n^2\,}+\frac{\,2\pi^4\nu^4\,}{\,189\, n^4\,}+O\big(n^{-6}\big)\,,\qquad n\to\infty.
\ee
Inserting this expansion into Theorem~\ref{lk7d3mf5} yields the cosecant--free expansion 
stated in Theorem~\ref{lk7d3mf5v2}. 
\end{proof}

\begin{figure}[!t]   
\centering
\includegraphics[width=0.8\textwidth]{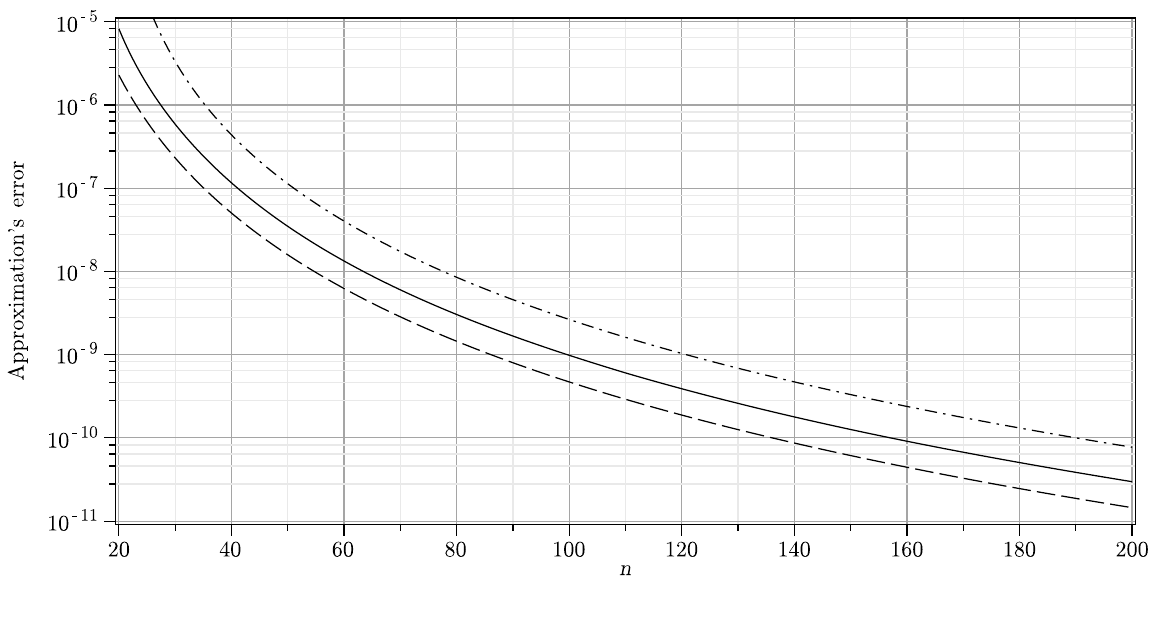}
\vspace{-1.2em}
\caption{The difference between $C_n(\nu)$ and its approximation, provided by the asymptotics from
Theorem~\ref{lk7d3mf5}, as a function of $n$ for $\nu=7$ (dashed line), $\nu=10$ (solid line)
and $\nu=16$ (dash--dotted line). It is clearly visible that the approximation error quickly tends to zero and that it is very small
(compare this graph to Fig.~\ref{937fybfe4}).
}
\label{937fybfe6}
\end{figure}

\begin{figure}[!t]   
\centering
\includegraphics[width=0.8\textwidth]{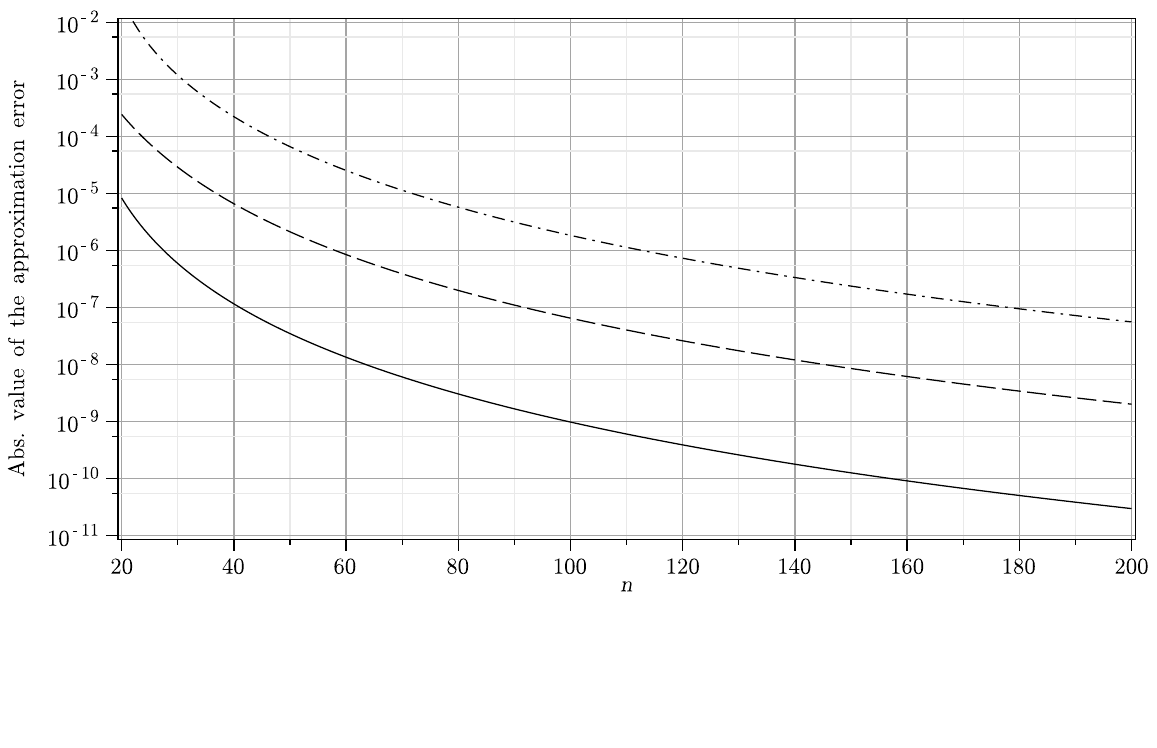}
\vspace{-4em}
\caption{The absolute difference between $C_n(\nu)$ and its approximation, offered by the 
cosecant--free asymptotics from Theorem~\ref{lk7d3mf5v2}, as a function of $n$ for $\nu=7$ (dashed line), 
$\nu=10$ (solid line) and $\nu=16$ (dash--dotted line).}
\label{937fybfe7}
\end{figure}

We conclude this Section with several graphs, showing how well the approximate formula, as well as the alternative asymptotics, represent
the sum $C_n(\nu)$. The difference between $C_n(\nu)$ and its approximation, provided by
Theorem~\ref{lk7d3mf4}, is shown in Figs.~\ref{937fybfe4} and~\ref{937fybfe5}.
Fig.~\ref{937fybfe4} displays the approximation error as a function of $n$ for three different values of argument $\nu$.
We see that the approximation error is very small, so that it can be used for many versatile purposes and applications.
At the same time, as expected, we observe that it does not tend to zero as $n$ increases; however, even in the worst case, that of $C_{n}(7)$,
the approximation error is still very small even for large values of $n$ (note that $\nu^{6}=7^6=117\,649$).
Fig.~\ref{937fybfe5} shows the approximation error as a function of $\nu$ at fixed $n$.
Fig.~\ref{937fybfe6} displays the error between $C_n(\nu)$ and its alternative asymptotics, offered by Theorem~\ref{lk7d3mf5}
in the same conditions as in Fig.~\ref{937fybfe4}, i.e. for the same three arguments $\nu$ and in the same interval of $n$. 
We see that the error is extremely small and quickly tends to zero as $n$ increases. In other words,
the asymptotics from Theorem~\ref{lk7d3mf5} may also be used 
as a very accurate approximation for $C_n(\nu)$, but the other side of the coin is the complexity of calculations, 
which may considerably grow in size or even become prohibitive. Lastly, as to the less complex 
approximation for $C_n(\nu)$, provided by the cosecant--free asymptotics from Theorem~\ref{lk7d3mf5v2},
we see, Fig.~\ref{937fybfe7}, that it is less accurate than that provided by Theorem~\ref{lk7d3mf5},
but the error still tends, though not very quickly, to zero as $n\to\infty$; at the same time, it remains more 
accurate than the approximation provided by Theorem~\ref{lk7d3mf4}.

Finally, as to the result given in Corollary~\ref{lpo3sai}, it is a simple consequence of Theorem~\ref{lk7d3mf5}. 
Setting $\nu=\frac16n$ or $\nu=\frac56n$, we see that the leading term in the asymptotics, 
provided in Theorem~\ref{lk7d3mf5}, identically vanishes. The remaining terms are $o(1)$, whence
we obtain the stated result. These two cases are the only ones when the sum $C_n(\nu)$ 
converges as $n\to\infty$ (in our case, trivially to zero, since there are no constant terms 
in the asymptotic expansion of $C_n(\nu)$).

\small
\bibliographystyle{crelle}

\begin{thebibliography}{1}

\bibitem{abramowitz_01}
M.~Abramowitz and I.~A. Stegun, {\it Handbook of mathematical
  functions with formula, graphs and mathematical tables}, 
  US Department of Commerce, National Bureau of Standards,
  1961.

\bibitem{alekseyev_01}
M.~A.~Alekseyev, On convergence of the Flint Hills series,  preprint,
  {\tt arXiv:1104.5100}  (2011).

\bibitem{allouche_04}
J.-P.~Allouche and D.~Zeilberger (appendix written by {Sh.~B. Ekhad}), Human
  and automated approaches for finite trigonometric sums, {\it Ramanujan
  J.}, {\bf 62} (2023), 189--214.
  
\bibitem{allouche_05}
J.-P.~Allouche, Note on a simple trigonometric equality, {\it Ramanujan
  J.}, {\bf 64} (2024), 529--535.

\bibitem{alzer_01}
H.~Alzer and S.~Koumandos, On a trigonometric sum of
  Vinogradov, {\it J.~Number Theory}, {\bf 105} (2004), 251--261.

\bibitem{annaby_01}
M.~H.~Annaby and R.~M.~Asharabi, Exact evaluations of finite
  trigonometric sums by sampling theorems, {\it Acta Math.~Sci.~Ser.~B (Engl.~Ed.)}, {\bf 31B},
  (2) (2011), 408--418.

\bibitem{apostol_01}
T.~M.~Apostol, {\it Introduction to analytic number theory},
  Springer--Verlag New-Yorc Inc., 1976.

\bibitem{arakawa_01}
T.~Arakawa, T.~Ibukiyama and M.~Kaneko, {\it Bernoulli
  Numbers and Zeta Functions}, Springer Monographs in Mathematics, Japan, 2014.

\bibitem{bateman_01}
H.~Bateman and A.~Erd\'elyi, {\it Higher Transcendental Functions}, 
Mc Graw--Hill Book Company, 1955.

\bibitem{beck_02}
M.~Beck, Dedekind cotangent sums, {\it Acta Arith.}, {\bf 109} (2),
  (2003),  109--130.

\bibitem{beck_01}
M.~Beck and M.~Halloran, Finite trigonometric character sums
  via discrete Fourier analysis, {\it Int.~J.~Number Theory},
  {\bf 6} (1)  (2010), 51--67.

\bibitem{berndt_04}
B.~C.~Berndt and B.~P.~Yeap, Explicit evaluations and
  reciprocity theorems for finite trigonometric sums, {\it Adv.~in Appl.~Math.},
	  {\bf 29}  (2002), 358--385.

\bibitem{berndt_05}
B.~C.~Berndt, S.~Sun and A.~Zaharescu, Finite
  trigonometric sums arising from Ramanujan's theta functions,
  {\it Ramanujan J.}, {\bf 63}  (2024), 673--685.
  
\bibitem{berndt_06}
B.~C.~Berndt, S.~Sun and A.~Zaharescu, Evaluations
  and relations for finite trigonometric sums, preprint, {\tt arXiv:2403.03445}.

\bibitem{bettin_01}
S.~Bettin and J.~B. Conrey, A reciprocity formula for a
  cotangent sum, {\it Int.~Math.~Res.~Not.~IMRN}, {\bf 24}  (2013),
  5709--5726.
  
  \bibitem{bettin_02}
S.~Bettin and J.~B.~Conrey, Period functions and cotangent
  sums, {\it Algebra Number Theory}, {\bf 7}  (2013), 215--242.

\bibitem{iaroslav_07}
Ia.~V.~Blagouchine, A theorem for the closed--form evaluation
  of the first generalized Stieltjes constant at rational arguments and
  some related summations, {\it J.~Number Theory}, {\bf 148} \& {\bf 151}
  (2015), 537--592 \&  276--277.
  
\bibitem{iaroslav_10}
Ia.~V.~Blagouchine, Three notes on Ser's and Hasse's 
  representations for the zeta-functions, {\it Integers}, {\bf 18A} (2018), art.~\#A3.

\bibitem{iaroslav_18}
Ia.~V.~Blagouchine and E.~Moreau, On a finite sum of
  cosecants appearing in various problems, {\it J.~Math.~Anal.~Appl.},
  {\bf 539} (1) (pt.~2), (2024) art.~\#128515.

\bibitem{byrne_01}
G.~J.~Byrne and S.~J.~Smith, The trace method for cotangent
  sums, {\it Proc.~Edinb.~Math.~Soc.}, {\bf 40}  (1997),
  393--401.
  
\bibitem{carlitz_01}
L.~Carlitz, A note on Bernoulli and Euler polynomials of
  the second kind, {\it Scripta Math.}, {\bf 25}  (1961), 323--330.

\bibitem{chen_06}
H.~Chen, {\it Excursions in Classical Analysis: Pathways to Advanced
  Problem Solving and Undergraduate Research}, The Mathematical Association of
  America (MAA), 2010.

\bibitem{chu_01}
W.~Chu, Reciprocal relations for trigonometric sums, {\it Rocky Mountain
  J.~Math.}, {\bf 48} (1)  (2018), 121--140.

\bibitem{chu_02}
W.~Chu, Reciprocal relations for trigonometric sums, {\it Math.~Slovaca},
  {\bf 73} (3)  (2023), 595--612.

\bibitem{cochrane_01}
T.~Cochrane, On a trigonometric inequality of Vinogradov,
  {\it J.~Number Theory}, {\bf 27} (1), 9--16  (1987).

\bibitem{cochrane_02}
T.~Cochrane and J.~C. Peral, An asymptotic formula for a
  trigonometric sum of Vinogradov, {\it J.~Number Theory}, {\bf 91}  (2001),
  1--19.

\bibitem{copson_01}
E.~T. Copson, {\it Asymptotic Expansions}, Cambridge University Press, Great
  Britain, 1965.

\bibitem{cvijovic_00}
D.~Cvijovi\'c and J.~Klinowski, Finite cotangent sums and the
  Riemann zeta function, {\it Math.~Slovaca}, {\bf 50} (2)  (2000),
  149--157.
  
\bibitem{cvijovic_01}
D.~Cvijovi\'c and H.~M. Srivastava, Closed-form summation of
  the Dowker and related sums, {\it J.~Math.~Phys.}, {\bf 48}
  (2007), art.~\# 043507.

\bibitem{cvijovic_03}
D.~Cvijovi\'c, Summation formulae for finite cotangent sums, {\it Appl.~Math.~Comput.}, 
{\bf 215}  (2009), 1135--1140.

\bibitem{cvijovic_04}
D.~Cvijovi\'c, Values of the derivatives of the cotangent at rational
  multiples of $\pi$, {\it Appl.~Math.~Lett.}, {\bf 22}  (2009), 217--220.

\bibitem{cvijovic_02}
D.~Cvijovi\'c, Summation formulae for finite tangent and secant sums,
  {\it Appl.~Math.~Comput.}, {\bf 218}  (2011), 741--745.

\bibitem{cvijovic_05}
D.~Cvijovi\'c, Two general families of integer--valued polynomials
  associated with finite trigonometric sums, {\it J.~Math.~Anal.~Appl.},
  {\bf 488}, (2020) art.~\# 124057.

\bibitem{derevyanko_01}
N.~Derevyanko, K.~Kovalenko and M.~Zhukovskii, On a
  category of cotangent sums related to the Nyman--Beurling criterion
  for the Riemann hypothesis, in {\it A.~Raigorodskii and
  M.~Th.~Rassias (eds.), Trigonometric Sums and Their
  Applications}, Springer, 2020, 1--28.

\bibitem{dieter_01}
U.~Dieter, Cotangent sums, a further generalization of Dedekind
  sums, {\it J.~Number Theory}, {\bf 18}  (1984), 289--305.

\bibitem{dingle_01}
R.~B.~Dingle, {\it Asymptotic Expansions: their Derivation and
  Interpretation}, Academic Press, USA, 1973.

\bibitem{dowker_00}
J.~S.~Dowker, Casimir effect around a cone, {\it Phys.~Rev.~D},
  {\bf 36} (10)  (1987), 3095--3101.

\bibitem{dowker_01}
J.~S.~Dowker, Heat kernel expansion on a generalized cone, 
{\it J.~Math.~Phys.}, {\bf 30}  (1989), 770--773.

\bibitem{dowker_02}
J.~S.~Dowker, On Verlinde's formula for the dimensions of
  vector bundles on moduli spaces, {\it J.~Phys.~A}, {\bf 25}  (1992), 2641--2648.
  
\bibitem{dowker_03}
J.~S.~Dowker, On sums of powers of cosecs, preprint, {\tt arXiv:1507.01848}.
  (2015).

\bibitem{dwigth_01_en}
H.~B. Dwight, {\it Tables of Integrals and Other Mathematical Data (3rd
  edition)}, The Macmillan Company, 1957.

\bibitem{ejsmonta_01}
W.~Ejsmonta and F.~Lehnerb, Some integral-valued
  trigonometric sums, {\it J.~Combin.~Theory Ser.~A}, {\bf 177}  (2021),
  art.~\#105324.

\bibitem{erdelyi_01}
A.~Erd\'elyi, {\it Asymptotic Expansions}, Dover, USA, 1956.

\bibitem{evgrafov_03_eng}
M.~A.~Evgrafov, {\it Asymptotic estimates and entire functions (transl. by A.~Shields)}, 
Gordon and Breach, USA, 1961.

\bibitem{evgrafov_01_eng}
M.~A.~Evgrafov, K.~A.~Bezhanov, Y.~V.~Sidorov, M.~V.~Fedoriuk and M.~I.~Shabunin, 
{\it A Collection of Problems in the Theory of Analytic Functions (2nd edn.) [in Russian]}, 
Nauka, Moscow, USSR, 1972.
	
\bibitem{fisher_04}
M.~E.~Fisher, Problem 69-14\up{*}, sum of inverse powers of cosines, {\it SIAM
  Rev.}, {\bf 13}  (1971), 116--119.

\bibitem{folsom_01}
A.~Folsom, Twisted Eisenstein series, cotangent--zeta sums, and
  quantum modular forms, {\it Trans.~London Math.~Soc.}, {\bf 7} (1)  (2020), 33--48.
  
\bibitem{fonseca_01}
C.~M.~da Fonseca, M.~L.~Glasser and V.~Kowalenko, 
  Generalized cosecant numbers and trigonometric inverse
  power sums, {\it Appl.~Anal.~Discrete~Math.}, {\bf 12}  (2018),
  70--109.

\bibitem{gardner_03}
L.~A. Gardner, Problem 69-14\up{*}, sum of inverse powers of cosines, {\it SIAM
  Rev.}, {\bf 11} (4)  (1969), 621.

\bibitem{goubi_01}
M.~Goubi, A.~Bayad and M.~O.~Hernane, Explicit and
  asymptotic formulae for Vasyunin--cotangent sums, {\it Publications de
  l'Institut Math\'ematique, Nouvelle S\'erie}, {\bf 102} (116)  (2017),
  155--174.

\bibitem{grabner_01}
P.~J.~Grabner and H.~Prodinger, Secant and cosecant sums and
  Bernoulli--N\"orlund polynomials, {\it Quaest.~Math.}, {\bf 30}  (2007).,
  159--165

\bibitem{gradstein_en}
I.~S.~Gradshteyn and I.~M.~Ryzhik, {\it Tables of Integrals,
  Series and Products (4th edition)}, Academic Press, 1980.

\bibitem{hagen_01}
J.~G.~Hagen, {\it Synopsis der h\"oheren Analysis. 1. Arithmetische und
  algebraische Analyse}, von Felix L.~Dames, Taubenstra\ss{}e 47, Berlin, Germany,
  1891.

\bibitem{hansen_01}
E.~R.~Hansen, {\it A Table of Series and Products}, Prentice--Hall, 1975.

\bibitem{hardy_02}
G.~H. Hardy, {\it Divergent series}, Oxford at the Clarendan press, 1949.

\bibitem{hargreaves_01}
R.~Hargreaves, C. Atomic systems based on free electrons,
  positive and negative, and their stability, {\it The London, Edinburgh, and
  Dublin Philosophical Magazine and J.~Science, ser.~6}, {\bf 44}  (1922),
  1065--1105.

\bibitem{harshitha_01}
K.~N.~Harshitha, K.~R.~Vasuki and M.~V.~Yathirajsharma, 
Trigonometric sums through Ramanujan's theory of
  theta functions, {\it Ramanujan J.}, {\bf 57}  (2022), 931--948.

\bibitem{he_01}
Y.~He, Explicit expressions for finite trigonometric sums, {\it J.~Math.~Anal.~Appl.}, 
{\bf 484}  (2020), art.~\#123702.

\bibitem{jolley_01}
L.~B.~W.~Jolley, {\it Summation of Series (second revised edition)}, Dover
  Publications Inc., New--York, USA, 1961.

\bibitem{kongting_01}
Y.~Kongting, On a trigonometric inequality of Vinogradov,
  {\it J.~Number Theory}, {\bf 49}  (1994), 287--294.

\bibitem{krylov_01}
V.~I. Krylov, {\it Approximate calculation of integrals}, The Macmillan
  Company, New-York, USA, 1962.

\bibitem{lewis_01}
J.~Lewis and D.~Zagier, Cotangent sums, quantum modular
  forms, and the generalized Riemann hypothesis, {\it Res.~Math.~Sci.}, 
  {\bf 6} (4)  (2019), 1--24.

\bibitem{lindelof_01}
E.~Lindel\"of, Le calcul des r\'esidus et ses applications \`a la
  th\'eorie des fonctions, Gauthier--Villars, Imprimeur Libraire du Bureau des
  Longitudes, de l'\'Ecole Polytechnique, Quai des Grands--Augustins, 55, Paris, France,
  1905.

\bibitem{maier_01}
H.~Maier, M.~Th.~Rassias and A.~Raigorodskii,
  The maximum of cotangent sums related to the Nyman--Beurling criterion
  for the Riemann hypothesis, in {\it A.~Raigorodskii and
  M.~Th.~Rassias (eds.), Trigonometric Sums and Their
  Applications}, Springer, 2020, 148--158.
  
\bibitem{milne_01}
M.~L.~Milne-Thomson, {\it The Calculus of finite differences}, MacMillan and
  Co., Limited, London, United Kingdom, 1933.

\bibitem{mitrinovic_02}
D.~S.~Mitrinovi\'c and J.~D.~Ke\v{c}ki\'c, {\it The Cauchy method
  of residues. Theory and applications}, D.~Reidel Publishing Company, 1984.
  
\bibitem{norlund_03}
N.~E.~N\"orlund, M\'emoire sur les polynomes de Bernoulli, {\it Acta
  Math.}, {\bf 43}  (1922), 121--196.
  
\bibitem{norlund_02}
N.~E.~N\"orlund, {\it Vorlesungen \"uber Differenzenrechnung},
  Springer, Berlin, Germany, 1924.

\bibitem{norlund_01}
N.~E.~N\"orlund, Sur les valeurs asymptotiques des nombres et des
  polyn\^omes de Bernoulli, {\it Rend.~Circ.~Mat.~Palermo},
  {\bf 10} (1)  (1961), 27--44.

\bibitem{olver_01}
{F.~W.~J.~Olver}, {\it Asymptotics and Special Functions}, Academic Press,
  USA, 1974.

\bibitem{peral_01}
J.~C.~Peral, On a sum of Vinogradov, {\it Colloq.~Math.},
  {\bf 60}  (1990), 225--233.

\bibitem{pickover_01}
C.~A. Pickover, {\it The Mathematics of Oz: Mental Gymnastics from Beyond
  the Edge}, Cambridge University Press, New--York, USA, 2002.

\bibitem{pomerance_01}
C.~Pomerance, Remarks on the P\'olya--Vinogradov inequality,
  {\it Integers}, {\bf 11}  (2011), 531--542.

\bibitem{prudnikov_en}
A.~P. Prudnikov, Y.~A. Brychkov and O.~I. Marichev,
  {\it Integrals and Series}, Gordon and Breach Science Publishers, USA, 1992.

\bibitem{raigorodskii_01}
A.~Raigorodskii and M.~Th.~Rassias~(eds.),
  {\it Trigonometric Sums and Their Applications}, Springer, 2020.
  
\bibitem{rassias_0}
M.~Th.~Rassias, {\it Analytic investigation of cotangent sums
  related to the Riemann zeta function}, Ph.D.~Thesis, ETH--Z\"urich,
  2014.
  
\bibitem{shkredov_01}
I.~D.~Shkredov, On sets of large trigonometric sums,
 {\it Izv.~Math.}, {\bf 72} (1) (2008), 149--168.

\bibitem{sofo_01}
A.~Sofo, General order Euler sums with multiple argument,
  {\it J.~Number Theory}, {\bf 189}  (2018), 255--271.

\bibitem{stembridge_01}
J.~R.~Stembridge and J.~Todd, On a trigonometrical sum,
 {\it Linear Algebra Appl.}, {\bf 35}  (1981), 287--291.

\bibitem{tong_01}
Y.~Tong, X.~Zeng, S.~Zhang, S.~Xu and Z.~Ren, 
The estimates of trigonometric sums and new bounds on a mean
  value, a sequence and a cryptographic function, {\it Des.~Codes Cryptogr.}, 
  {\bf 91}  (2023), 921--949.

\bibitem{vinogradov_01}
I.~M.~Vinogradov, {\it Elements of Number Theory}, Dover Publication, Inc.,
  1954.

\bibitem{volkovyskii_01_eng}
L.~I.~Volkovyskii, G.~L. Lunts and I.~G.~Aramanovich, 
{\it A Collection of Problems on Complex Analysis}, Pergamon Press,
  United Kingdom, 1965.

\bibitem{watson_02}
G.~N.~Watson, XVI. The sum of a series of cosecants, {\it 
Philos.~Mag.~Ser.~6}, {\bf 31}  (1916), 111--118.

\bibitem{watson_03}
G.~N.~Watson, LVIII. The sum of a series of cosecants, {\it 
Philos.~Mag.~Ser.~6}, {\bf 45}  (1923), 577--581.

\bibitem{wolfram_03}
E.~W.~Weisstein, Flint Hills series, From MathWorld -- A
  Wolfram Web Resource. {\tt https://mathworld.wolfram.com/FlintHillsSeries.html}

\bibitem{williams_01}
W.~L.~G. Williams, The sum of a series of cosecants, {\it 
Philos.~Mag.}, {\bf 19} (126) (1935), 402--404.

\end{thebibliography}

\end{document}